\documentclass[12pt,final]{amsart}

\usepackage{showkeys}
\usepackage{xypic}
\newcommand{\Z}{\ensuremath{{\mathbb Z}}}
\newcommand{\Q}{\ensuremath{\mathbb Q}}

\newcommand{\coh}[2]{{\rm H}^{#1}(#2)}

\newcommand{\shom}[2]{{\mathcal Hom}_{#1}(#2)}

\newcommand{\struct}[1]{{\mathcal O}_{#1}}

\newcommand{\spec}{{\rm Spec}}

\newcommand{\gm}{{\mathbb G}_m}

\theoremstyle{remark}
\newtheorem{example}{Example}[section]
\newtheorem{notation}[example]{Notation}
\newtheorem{definition}[example]{Definition}

\theoremstyle{plain}
\newtheorem{proposition}[example]{Proposition}
\newtheorem{corollary}[example]{Corollary}
\newtheorem{theorem}[example]{Theorem}

\newcommand{\moduli}[1]{\text{M}(\rho, #1)}
\newcommand{\B}{{\mathfrak B}}
\renewcommand{\Q}{{\mathfrak Q}}
\newcommand{\N}{{\mathfrak N}}

\newcommand{\isom}{\text{\underline{Isom}}}
\renewcommand{\shom}{\text{\underline{Hom}}}
\newcommand{\sspec}{\text{\underline{Spec}}}
\newcommand{\sym}{\text{Sym}}

\newcommand{\E}{{\mathcal E}}
\newcommand{\K}{{\mathcal K}}
\newcommand{\F}{{\mathcal F}}
\newcommand{\A}{{\mathcal A}}
\newcommand{\Ba}{{\mathcal B}}


\newcommand{\lie}{\mathrm{Lie}}
\newcommand{\ad}{\mathrm{ad}}
\newcommand{\Sln}{\mathrm{SL}_n}
\newcommand{\Sptn}{\mathrm{Sp}_{2n}}
\newcommand{\PSptn}{\mathrm{PSp}_{2n}}
\newcommand{\GSptn}{\mathrm{GSp}_{2n}}

\newcommand{\Sotn}{\mathrm{SO}_{2n}}
\newcommand{\PSotn}{\mathrm{PSO}_{2n}}

\newcommand{\Gln}{\mathrm{GL}_n}
\newcommand{\rx}{\sqrt{x}}
\newcommand{\diag}{\mathrm{diag}}
\newcommand{\Ext}{\mathrm{Ext}}
\newcommand{\Hom}{\mathrm{Hom}}
\newcommand{\End}{\mathrm{End}}
\newcommand{\orb}{\mathrm{orb}}
\newcommand{\Br}{\mathrm{Br}}

\begin{document}

\title[Nori's Obstruction]{On Nori's Obstruction to Universal Bundles.}

\author{Emre Coskun, Ajneet Dhillon and Nicole Lemire}

\address{Department of Mathematics \\ University of Western Ontario \\ London Ontario N6A 5B7}

\email{ecoskun@uwo.ca}
\email{adhill3@uwo.ca}
\email{nlemire@uwo.ca}

\begin{abstract}Let $G$ be $\Sln, \Sptn$ or $\Sotn$. We consider the moduli space $M$ of 
semistable principal $G$-bundles over a curve $X$. Our main result is that if
$U$ is a Zariski open subset of $M$ then there is no universal bundle on
$U\times X$.
\end{abstract}

\maketitle

\section{Introduction}
We work over an algebraically closed field $k$  of characteristic zero. Let
$X$ be a smooth projective curve over $k$. If $G$ is a semisimple algebraic group, a moduli
space of semistable $G$-bundles over $X$ is constructed in various works, for example
\cite{ramanathan:96a}, \cite{ramanathan:96b} and \cite{schmitt:02}. We outline the construction
below in section \ref{s:moduli}. The construction of the moduli space
depends upon fixing a faithful representation $\rho:G\rightarrow\Sln$. We will denote the obtained
moduli space by
$\moduli{G}$. 

The main result of this paper is that a universal bundle cannot exist over the generic point of $\moduli{G}^0$
 when $G$ is a classical group, not of adjoint type, with its standard representation. (Here $\moduli{G}^0$
denotes the connected component containing the trivial bundle. It is an irreducible algebraic variety.)
When $G$ is of adjoint type it is known that a universal bundle does indeed exist.
Our result is closely related to the work of Balaji, Biswas, Nagaraj and Newstead in
\cite{balaji:06}. They show that for a connected semisimple algebraic group, a universal bundle
exists over $M'$ if and only if $H$ is of adjoint type. Here $M'$ is the open subset
parameterising bundles with automorphism group the centre of $H$. Using \cite[IV Corollary 2.6]{milne:80}
one sees that our results are a proper subset of those in \cite{balaji:06}. The
real interest in the results of this paper comes from the methods. The proof
of the theorem in \cite{balaji:06} requires a detour in the world of
topology while our proof is entirely in the realm of algebra. The reason we
are stuck with characteristic zero arises from the fact that we are applying 
Luna's slice theorem.

The method of proof that we use originates from some ideas of M. Nori. The kernel
of our argument can be found in \cite[Part 6]{seshadri:82}. After completing our work we learned that
N. Hoffman had announced different proofs valid for all reductive groups with non-trivial centres.
This work is in the process of being written.

\textbf{Acknowledgements} The authors would like to thank V. Balaji and C. Seshadri for pointing out the
existence of \cite{seshadri:82}.

\section{Construction of $\moduli{G}$}
\label{s:moduli}
We will follow the construction in \cite{schmitt:02}, however we do not need
all the subtleties of the construction that come from wanting to compactify
a moduli space. For the convenience of the reader, we outline the construction
of an open subset of the moduli space in \cite{schmitt:02} that we will denote by
$\moduli{G}$. 

\subsection{Reductions of structure group}
Recall that we have fixed a faithful representation
$\rho:G\rightarrow\Sln$. If $\E$ is a rank $n$ vector bundle
then the associated $\Gln$-principal bundle is the scheme of isomorphisms
$$
\isom(\struct{X}^n,\E).
$$
We are interested in reductions of structure group of 
this principal bundle to $G$. This is nothing but a section of the
fibration
$$
\isom(\struct{X}^n,\E)/G \rightarrow X.
$$
There is an inclusion
$$
\isom(\struct{X}^n,\E)/G\subseteq \shom(\struct{X}^n,\E)/\!\!/G =\sspec(\sym^*(\struct{X}^n\otimes \E^\vee)^{G}).
$$
Hence a reduction of structure group is just an algebra homomorphism 
$$
\tau:\sym^*(\struct{X}^n\otimes \E^\vee)^G\rightarrow \struct{X}
$$
such that the induced section of
$$
\shom(\struct{X}^n,\E)/\!\!/G \rightarrow X
$$
has image inside of the isomorphism locus.

\subsection{Semistability}
We recall the notions of stability and semistability using \cite{gomez:08}
as our reference.
Let $\gamma:\gm\rightarrow G$ be a one parameter subgroup
of $G$. Together with the representation $\rho:G\rightarrow\text{SL}(W)$
it defines a weight space decomposition
$$
W=\oplus_{n\in \Z} W_n.
$$
We may order the weights $n_1<n_2<\dots <n_k$.  There is an induced flag
$$
F_1\subseteq F_2\subseteq\ldots \subseteq F_k = W
$$
where
$$
F_i = \oplus_{j<i} W_{n_j}.
$$
The parabolic subgroup of $G$ preserving this flag will be denoted by $Q(\gamma)$.
We also set $\alpha_i = \frac{n_{i+1}-n_i}{\dim W}$.

Let $P\rightarrow X$ be a principal $G$-bundle with associated vector bundle
$\rho_*(P)$. A reduction of structure group $\tau$ of $P$ to $Q(\gamma)$ yields
a filtration
$$
0\subseteq \A_1\subset \A_2\dots\subset \A_k=\rho_*(P),
$$
which we denote by $P(\tau,\gamma)_\bullet$.
Finally set 
$$
L(P,\tau,\gamma) = \sum \alpha_i(\text{rk}\A_i\deg(\A) - \text{rk}\A \deg(A_i)).
$$
\begin{definition}
 We say that $P$ is \emph{semistable} if 
$L(P,\tau,\gamma)\ge 0$ for every one parameter subgroup $\gamma$
and reduction of structure group $\tau$.
\end{definition}

Note that since we are over a curve this definition is equivalent to the
definition given in \cite{schmitt:02}. Further, since the $\alpha_i$ are positive,
the trivial principal bundle is semistable.

Given a positive rational number, $\delta$, there is a related notion of 
$\delta$-semistability that we do not recall here as we do not need it. However, let us note the
following result.

\begin{theorem}
 Fix a Hilbert polynomial $F$. There is a positive rational number
$\delta_0$ such that for every $\delta>\delta_0$ and every principal bundle
$P$ with $\rho_*(P)$ having Hilbert polynomial $F$, the following are equivalent:
\begin{enumerate}
 \item $P$ is semistable.
 \item $P$ is $\delta$-semistable
\end{enumerate}
\end{theorem}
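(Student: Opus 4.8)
The plan is to compare the numerical function controlling semistability — the quantity $L(P,\tau,\gamma)$ — with the one controlling $\delta$-semistability. The two differ by a non-negative correction term which, over the bounded family of bundles with $\rho_*(P)$ of Hilbert polynomial $F$, is bounded away from zero whenever it does not vanish; taking $\delta$ past the resulting threshold makes this term dominate wherever it is present, and the two notions coincide.

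First I would unwind the definition of $\delta$-semistability from \cite{schmitt:02}: $P$ is $\delta$-semistable precisely when
$$
L(P,\tau,\gamma)+\delta\cdot m(P,\tau,\gamma)\ \ge\ 0
$$
for every one parameter subgroup $\gamma$ of $G$ and every reduction $\tau$ to $Q(\gamma)$ permitted by the GIT construction — that is, by subsheaves of $\A=\rho_*(P)$, not necessarily subbundles, carrying compatible $\gamma$-data. Here $m(P,\tau,\gamma)\ge 0$ is the contribution of the decoration part of Schmitt's linearization; it takes values in a fixed discrete set, so it is either $0$ or at least a fixed positive constant $c_0$, and $m(P,\tau,\gamma)=0$ exactly when $\tau$ underlies a genuine reduction of structure group of $P$ to $Q(\gamma)$, in which case the displayed quantity is just the $L(P,\tau,\gamma)$ of the definition above. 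Granting this description, the implication $(2)\Rightarrow(1)$ is immediate and holds for every $\delta>0$: a genuine reduction of $P$ to some $Q(\gamma)$ is among the data tested by the inequality, with $m=0$, so the inequality forces $L(P,\tau,\gamma)\ge 0$; hence $P$ is semistable.

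For $(1)\Rightarrow(2)$ I would combine boundedness with the standard reduction to maximal parabolics. Since $G$ is $\Sln$, $\Sptn$ or $\Sotn$ acting through its standard representation into $\Sln$, the vector bundle $\A=\rho_*(P)$ underlying a semistable $G$-bundle $P$ is itself semistable (Ramanan--Ramanathan, and its symplectic and orthogonal analogues); with $F$ fixed, the slope of $\A$ is therefore fixed and these $\A$ form a bounded family. A convexity argument reduces both semistability inequalities to testing $\gamma$ with $Q(\gamma)$ a maximal parabolic, of which there are finitely many types, and for such $\gamma$ the weights $\alpha_i$ may be normalised to lie in a finite set, while the ranks of the steps $\A_i$ of $P(\tau,\gamma)_\bullet$ automatically lie in a finite set; moreover $\A_i\subseteq\A$ with $\A$ semistable, so $\deg\A_i$ is bounded above. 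If, in addition, $L(P,\tau,\gamma)<0$, the defining formula for $L$ then forces $\deg\A_i$ to be bounded below as well, whence $L(P,\tau,\gamma)$ is bounded below. Let $-C$, with $C=C(F)\ge 0$, be a lower bound for $L(P,\tau,\gamma)$ over all semistable $P$ with $\rho_*(P)$ of Hilbert polynomial $F$ and all tested $(\tau,\gamma)$ having $L(P,\tau,\gamma)<0$, and set $\delta_0=C/c_0$. Then for $\delta>\delta_0$, for $P$ semistable, and for any tested $(\tau,\gamma)$: if $L(P,\tau,\gamma)\ge 0$ the inequality holds because $m\ge 0$; and if $L(P,\tau,\gamma)<0$ then $m(P,\tau,\gamma)\ne 0$, for otherwise $\tau$ would be a genuine reduction and semistability would give $L(P,\tau,\gamma)\ge 0$, so $m(P,\tau,\gamma)\ge c_0$ and $L(P,\tau,\gamma)+\delta\,m(P,\tau,\gamma)\ge -C+\delta c_0>0$. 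Thus $P$ is $\delta$-semistable.

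The main obstacle is the first step: extracting from \cite{schmitt:02} the splitting $L(P,\tau,\gamma)+\delta\,m(P,\tau,\gamma)$ of the relevant Hilbert--Mumford function, checking that the coefficient $m$ is non-negative and takes values in a discrete set, and — most delicately — that $m$ vanishes exactly on genuine reductions of structure group of $P$ to a parabolic of $G$, not merely of $\Sln$. This is precisely the part of the GIT construction whose subtleties we have elected not to reproduce; with it in hand, the remainder is the boundedness bookkeeping above, and one may otherwise simply cite \cite{schmitt:02}.
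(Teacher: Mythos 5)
The paper's own ``proof'' of this statement is a one-line citation to \cite[Theorem 5.4.1]{gomez:08}; there is no argument in the paper itself to compare against. Your proposal is a sketch of the sort of argument one would expect to find behind that citation, and the overall shape is right: split the Hilbert--Mumford quantity for $\delta$-semistability as a slope part $L$ (or rather $M(\F_\bullet,\alpha)$, since the test data in the GIT framework are weighted filtrations of $\F=\rho_*(P)$, not reductions of $P$) plus $\delta$ times a ``decoration weight'' $m$; argue that $m$ is discretely valued and vanishes precisely on filtrations arising from parabolic reductions; use boundedness to control $L$; and take $\delta$ past the resulting threshold. You are also candid that the decomposition itself, and the three structural properties of $m$ (non-negativity for genuine $G$-bundles, discreteness, and $m=0 \Leftrightarrow$ reduction), are exactly the content you are not reproving, and you close by saying one can cite \cite{schmitt:02} for them. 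Since the paper defers entirely to \cite{gomez:08}, your proposal is best read as an expansion of the citation rather than an alternative proof.

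One point deserves a sharper look. In your $(1)\Rightarrow(2)$ step you bound $L$ below by $-C$ over the destabilising filtrations and set $\delta_0 = C/c_0$. But, as you yourself observe, for the groups at hand in characteristic zero a semistable $G$-bundle $P$ has $\A=\rho_*(P)$ semistable as a vector bundle of degree $0$; for a saturated one-step filtration $\A_1\subseteq\A$ this already gives $\operatorname{rk}\A_1\deg\A - \operatorname{rk}\A\deg\A_1 \ge 0$, and unsaturating only improves the inequality. So the case $L<0$ you are guarding against does not actually occur, your $C$ is $0$, and your argument would prove the equivalence for \emph{every} $\delta>0$. That cannot be the intended content of a theorem which asserts the existence of a threshold $\delta_0$. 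The genuine role of large $\delta$ in \cite{gomez:08} and \cite{schmitt:02} is to control the behaviour of the decoration weight on filtrations that do \emph{not} come from reductions and on pseudo-$G$-bundles whose $\tau$ is not yet known to be a reduction; in particular your blanket assumption $m\ge 0$ is itself a nontrivial claim whose proof is intertwined with that threshold, not something that can be taken for granted independently of it. So while the bookkeeping you lay out is a reasonable template, the place where the argument actually needs work is precisely the step you have labelled ``the main obstacle,'' and the boundedness estimate you supply in its place does not by itself produce a nontrivial $\delta_0$.
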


\begin{proof}
 See \cite[Theorem 5.4.1]{gomez:08}.
\end{proof}

In \cite{schmitt:02} and \cite{gomez:08}, moduli spaces for $\delta$-semistable bundles
are constructed which depend upon an extra parameter. We will always assume
that the parameter is chosen so the above theorem applies.

\subsection{The Parameter Space}
\label{s:parameter}

The first step towards constructing a moduli space for semistable
bundles is to construct a parameter space for them. Consider the family
$\B$ of vector bundles of rank $n$ and trivial determinant that
admit a reduction of structure group to a semistable $G$-bundle. It is shown
that $\B$ is bounded in \cite[Remark 3.7]{schmitt:02}.

Using boundedness, we can find an integer $N$ so that for all $m\ge N$ and every 
$\F\in\B$ we have
\begin{itemize}
 \item $\coh{i}{X,\F(m)}=0$ for $i>0$
 \item $\F(m)$ is generated by global sections.
\end{itemize}

Set $W=\coh{0}{X,\F(N)}$ for some chosen $\F\in\B$.
We have a quot scheme $\Q$ of quotients of $\struct{X}(-N)\otimes W$ of
degree $0$ and rank $n=\dim \rho$. We have on $\Q\times X$ a universal quotient
$$
W\otimes\pi^*\struct{X}(-N)\rightarrow \F^{\text{univ}},
$$
where $\pi$ is the projection $\Q \times X \to X$. There is an open subset $U\subset \Q$ parameterising locally free quotients.
In \cite[page 1197-1198]{schmitt:02}, a scheme $\N'$ and a map
$$
\N'\rightarrow U
$$
is constructed. The fibre of the scheme $\N'$ over the quotient
$$
W\otimes\struct{X}(-N)\rightarrow \F
$$
parameterises  the collection of algebra homomorphisms 
$$
\tau:\sym^*(\struct{X}^n\otimes \F^\vee)^G\rightarrow \struct{X}.
$$
The object of interest in \cite{schmitt:02} is
$$
 \N' /\!\!/ \text{GL}(W).
$$
There is an open subscheme $\N$ parameterising those algebra homomorphisms
$$
\tau:\sym^*(\struct{X}^n\otimes \F^\vee)^G\rightarrow \struct{X}.
$$
that come from reductions of structure group. We are interested in the quotient
$$
\moduli{G} = \N /\!\!/ \text{GL}(W).
$$

\section{A Little Deformation Theory}

\subsection{Deformations of Principal Bundles}
We begin by recalling some facts from the deformation theory
of principal bundles. Proofs can be found in
\cite{illusie:71}, \cite{illusie:72} and \cite{illusie:71b}.
We do not need the full force of the results in the
cited references as the group scheme $G\times X\rightarrow X$
is smooth.

For a $G$-bundle $P\rightarrow X$ we denote by
$\ad(P)$ the adjoint bundle which is defined by
$$
\ad(P) = P\times_{G,\ad}\lie(G).
$$

Consider an extension of local Artinian $k$-algebras
$$
0\rightarrow I \rightarrow A' \rightarrow A \rightarrow 0
$$
with $I^2 = 0 $.

\begin{theorem}\label{t:ill}
 Let $P_A$ be a $G$-bundle on $X_A$. Then this bundle
extends to $X_{A'}$. The collection of all such lifts 
is a torsor under 
$$
\coh{1}{X_A,\ad(P_A)\otimes I}.
$$
\end{theorem}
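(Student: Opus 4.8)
The plan is to establish existence of a lift by a \v Cech argument that uses the smoothness of $G$ together with the fact that $X$ is a curve, and then to organize the cocycle bookkeeping so that the set of lifts becomes a torsor under $\coh{1}{X_A,\ad(P_A)\otimes I}$. First choose a finite affine open cover $\{U_i\}$ of $X$ such that $P_A$ is trivial over each $U_{i,A}:=U_i\times_X X_A$, with transition cocycle $g_{ij}\in G(U_{ij,A})$, where $U_{ij}=U_i\cap U_j$. Since $G$ is smooth over $k$ and $U_{ij,A'}\to U_{ij,A}$ is a square-zero closed immersion of affine schemes, formal smoothness lets us lift each $g_{ij}$ to some $g'_{ij}\in G(U_{ij,A'})$. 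The cochain
$$
c_{ijk}:=g'_{ij}\,g'_{jk}\,(g'_{ik})^{-1}\in G(U_{ijk,A'})
$$
maps to the identity in $G(U_{ijk,A})$ because $g_{ij}g_{jk}=g_{ik}$, so it lies in the kernel of $G(U_{ijk,A'})\to G(U_{ijk,A})$. Because $I^2=0$ and $G$ is smooth, this kernel is, over each chart, canonically $\lie(G)\otimes I$, and as the indices vary these local models are patched by the adjoint action of the $g_{ij}$, so the $c_{ijk}$ assemble into a \v Cech $2$-cochain with values in the coherent sheaf $\ad(P_A)\otimes I$; a direct computation shows it is a $2$-cocycle.

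The key point is now that $X$ is a curve, so $\coh{2}{X_A,\ad(P_A)\otimes I}=0$ (note $\ad(P_A)\otimes I$ is a finite direct sum of copies of the coherent sheaf $\ad(P_A)$, since $I$ is a finite-dimensional $k$-vector space, and $X_A$ has dimension $1$). Hence the class of $(c_{ijk})$ is a coboundary; adjusting the $g'_{ij}$ by this coboundary yields a system of transition functions satisfying the cocycle condition exactly, hence a $G$-bundle $P_{A'}$ on $X_{A'}$ restricting to $P_A$. This proves the existence assertion.

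For the torsor structure, let $g'_{ij}$ and $g''_{ij}$ be two systems of transition functions, each a cocycle lifting $g_{ij}$. Then $h_{ij}:=g''_{ij}(g'_{ij})^{-1}$ lies in the kernel described above, i.e.\ defines a \v Cech $1$-cochain with values in $\ad(P_A)\otimes I$, which one checks is a cocycle; moreover two such systems define isomorphic lifts precisely when the associated cochains differ by a coboundary, coming from a change of the chosen trivializations over the $U_i$. Conversely, twisting a cocycle $g'_{ij}$ by any $1$-cocycle valued in $\ad(P_A)\otimes I$ produces another lift. This exhibits a simply transitive action of $\coh{1}{X_A,\ad(P_A)\otimes I}$ on the set of isomorphism classes of lifts.

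The main technical obstacle I anticipate is the careful identification, for a square-zero extension $B'\to B$ of $k$-algebras with $G$ smooth, of $\ker(G(B')\to G(B))$ with $\lie(G)\otimes_k I$, together with the verification that over $X_A$ the local identifications glue, via the adjoint action of the $g_{ij}$, into the twisted sheaf $\ad(P_A)\otimes I$ rather than the constant $\lie(G)\otimes I$; once this is in place, the cocycle and coboundary checks, and the appeal to the vanishing of $\coh{2}{X_A,-}$ for a curve, are routine. One could alternatively package the whole argument by regarding $P_A$ as a morphism $X_A\to BG$ into the smooth classifying stack, whose relative tangent complex pulls back to $\ad(P_A)$ placed in degree $-1$; then the obstruction to lifting lies in $\coh{2}{X_A,\ad(P_A)\otimes I}=0$ and the lifts form a torsor under $\coh{1}{X_A,\ad(P_A)\otimes I}$, recovering the statement. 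I would nevertheless present the \v Cech argument, since it is elementary and self-contained.
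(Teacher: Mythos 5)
Your argument is correct and genuinely different in character from what the paper does: the paper simply cites Illusie's cotangent-complex machinery (\cite[Theorem 2.6, Remark 2.6.1]{illusie:71b}) and notes that $\coh{2}{}$ vanishes on a curve, whereas you give a self-contained \v Cech proof. Both establish the same thing: the deformation theory of $G$-torsors against a square-zero extension is governed by $\ad(P_A)\otimes I$ in degrees $1$ (lifts) and $2$ (obstructions), and the curve hypothesis kills the obstruction space. The \v Cech proof is more elementary and makes the identification of the torsor group transparent; the Illusie reference is shorter and covers more general base schemes and group schemes.

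Two small points worth tightening. First, for a non-special group such as $\Sotn$ a $G$-torsor need not be Zariski-locally trivial; you should either invoke the fact that $G$-torsors on a smooth curve over an algebraically closed field of characteristic zero are Zariski-locally trivial (Steinberg/Tsen at the generic point plus a patching argument), or simply run the \v Cech argument in the \'etale topology, which changes nothing since the cohomology of the coherent sheaf $\ad(P_A)\otimes I$ agrees in the Zariski and \'etale topologies. Second, the parenthetical justification of $\coh{2}{X_A,\ad(P_A)\otimes I}=0$ is slightly off: $\ad(P_A)\otimes I$ means $\ad(P_A)\otimes_A I$, and since $I$ is merely a finite $A$-module rather than a free one, this need not be a direct sum of copies of $\ad(P_A)$. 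It is, however, a coherent sheaf on $X_A$, and $X_A$ has Krull dimension $1$ because $A$ is Artinian, so Grothendieck's vanishing theorem gives $\coh{2}{}=0$ directly; that is what the paper's remark ``$H^2$ vanishes as we are on a curve'' is pointing at.
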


\begin{proof}
 See \cite[Theorem 2.6]{illusie:71b}
and \cite[Remark 2.6.1]{illusie:71b}. Note that
$H^2$ vanishes as we are on a curve.
\end{proof}

Next we need some facts about flat deformations of coherent
sheaves.

\begin{theorem}
Fix a sheaf $\E$ on $X$ and consider a flat
family of coherent sheaves $\F_A$ on $X_A$ that
fit into an exact sequence
$$
0\rightarrow \K_A\rightarrow \E\otimes_k A \rightarrow \F_A
\rightarrow 0 
$$
Denote by $\F$ and $\K$ the restriction to the closed point
of $A$. If $$\Ext^1(\K,\F)=0$$ then we may extend
the exact sequence to $X_{A'}$ so that $\F_A$ extends to a flat
family. Furthermore, the collection of all lifts is a torsor under
$$ 
\Hom(\K_A, \F_A\otimes I).
$$
\end{theorem}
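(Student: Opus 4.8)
The statement is a standard piece of deformation theory for short exact sequences of sheaves, and the plan is to reduce it to a concrete cocycle/obstruction computation. First I would work locally: choose an affine open cover $\{V_j\}$ of $X$ over which $\E$, $\K_A$ and $\F_A$ are well understood, so that the extension $0\to\K_A\to\E\otimes_k A\to\F_A\to 0$ is represented by a collection of explicit maps on the $V_j$. On each $V_j$ the surjection $\E\otimes_k A\to\F_A$ can be lifted over $A'$ to a map $\E\otimes_k A'\to\F'_{A,j}$ for \emph{some} flat lift $\F'_{A,j}$ of $\F_A$ — this is possible because flat deformations of a coherent sheaf on an affine scheme are always unobstructed and surjections lift over a square-zero extension. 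The kernel of the lifted surjection is then a lift $\K'_{A,j}$ of $\K_A$, so locally the whole short exact sequence lifts to $X_{A'}$.

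Next I would measure the failure of these local lifts to agree on overlaps $V_{ij}=V_i\cap V_j$. On $V_{ij}$ the two lifts of the surjection $\E\otimes_k A'\to\F_A$ differ by a map $\E\otimes_k A\to\F_A\otimes_k I$ (since $I^2=0$ the ambiguity is an $I$-linear perturbation), and because this perturbation must kill $\K_A$ it factors through a map $\K_A\to\F_A\otimes_k I$; more precisely, after identifying the local lifts $\K'_{A,i}$ and $\K'_{A,j}$ these patching discrepancies assemble into a \v Cech $1$-cocycle with values in $\shom(\K_A,\F_A\otimes I)$, hence a class in $\coh{1}{X_A,\shom_{\struct{X_A}}(\K_A,\F_A\otimes I)}$. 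I would then invoke the local-to-global spectral sequence (or directly the exact sequence of low-degree terms) relating $\coh{1}{X_A,\shom(\K_A,\F_A\otimes I)}$ and $\Ext^1(\K_A,\F_A\otimes I)$; since $\Ext^1(\K,\F)=0$ by hypothesis and we are over a curve, one argues that the relevant global $\Ext^1$ vanishes as well (the $A$-linear statement follows from the $k$-linear one by a standard dévissage along the filtration of $A$ by powers of its maximal ideal, using flatness of $\F_A$ and $\K_A$). Therefore the obstruction cocycle is a coboundary, the local lifts can be glued, and a global lift of the exact sequence to $X_{A'}$ exists.

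Finally I would identify the torsor structure on the set of lifts. Given two global lifts of the exact sequence, their difference (again using $I^2=0$) is controlled by a global $\struct{X_{A'}}$-linear, equivalently $\struct{X_A}$-linear, map $\K_A\to\F_A\otimes_k I$, i.e. an element of $\Hom(\K_A,\F_A\otimes I)$; conversely adding such a homomorphism to a given lift produces another lift, and these operations are inverse to each other. A short check that the action is free and transitive — free because a nonzero homomorphism genuinely changes the gluing data, transitive by the difference computation — completes the proof. I expect the main obstacle to be the bookkeeping in the second paragraph: keeping careful track of which flat lift $\F'_{A,j}$ one has chosen locally, showing that the discrepancies genuinely land in $\shom(\K_A,\F_A\otimes I)$ rather than in a larger sheaf, and then correctly passing from the vanishing of the $k$-linear $\Ext^1(\K,\F)$ to the vanishing of the $A$-linear obstruction group that actually controls the gluing. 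Everything else is routine square-zero deformation theory.
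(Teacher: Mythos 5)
The paper offers no proof of this statement: it simply refers to Sernesi, whose treatment identifies the obstruction to lifting the quotient $q_A$ directly with a canonical class in $\Ext^1_{X_A}(\K_A,\F_A\otimes_A I)$ and never passes through a local-to-global argument. Your sketch takes a genuinely different, \v Cech-theoretic route, and it contains a real gap at the first step: the assertion that flat deformations of a coherent sheaf on an affine scheme are always unobstructed is false. The obstruction lives in an $\Ext^2$ that need not vanish even on an affine; for a concrete failure, take the affine cuspidal curve $\mathrm{Spec}\,k[s,t]/(t^2-s^3)$ and the length-one sheaf at the cusp, deformed over $A=k[\epsilon]/(\epsilon^4)$ by letting $s$ and $t$ both act on $A$ as multiplication by $\epsilon^2$; a flat lift over $A'=k[\epsilon]/(\epsilon^5)$ would force $\epsilon^4=0$ in $A'$, so none exists. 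What actually makes the local lifts exist in the present paper is that $X$ is a smooth curve and $\E$ is locally free: a subsheaf $\K$ of a locally free sheaf on a smooth curve is itself locally free, hence $\K_A$ is locally free on $X_A$, and so both the sheaf $\underline{\mathrm{Ext}}^1(\K_A,\F_A\otimes I)$ and the sheaf $\underline{\mathrm{Ext}}^2(\F_A,\F_A\otimes I)$ vanish; this is what guarantees that the surjection lifts affine-locally and that the patching discrepancies land in $\shom(\K_A,\F_A\otimes I)$. Without flagging this you never construct the cocycle in $\coh{1}{X_A,\shom(\K_A,\F_A\otimes I)}$, and the rest of the argument cannot start.

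Once local lifts are granted, the remaining steps are fine: the edge map $\coh{1}{X_A,\shom(\K_A,\F_A\otimes I)}\hookrightarrow\Ext^1_{X_A}(\K_A,\F_A\otimes I)$ is injective, and the d\'evissage you describe (filter $I$ by powers of $\mathfrak m_A$, use flatness of $\K_A$ and base change to reduce each graded piece to $\Ext^1_X(\K,\F)$) kills the target, hence the gluing class, and the torsor statement under $\Hom(\K_A,\F_A\otimes I)$ follows as you indicate. Still, it is worth noting that the direct argument is both shorter and stronger: since the obstruction class is defined in $\Ext^1_{X_A}(\K_A,\F_A\otimes I)$ from the outset, the hypothesis $\Ext^1(\K,\F)=0$ kills it with no appeal to local liftability, so the theorem holds for any $X$ and any coherent $\E$, not just in the curve\slash locally free situation where your \v Cech construction is valid.
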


\begin{proof}
 See \cite{sernesi:86}.
\end{proof}

\subsection{The Smoothness of $\N$}

\begin{proposition}
 Let $p$ be a point of $\N$ then
$\N$ is smooth at $p$.
\end{proposition}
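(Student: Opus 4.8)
The plan is to show that $\N$ is smooth by exhibiting, at each point $p$, a surjection on tangent/obstruction spaces via deformation theory. A point $p\in\N$ consists of three pieces of data: a point of the quot scheme $\Q$ lying in $U$, i.e. a locally free quotient $W\otimes\struct{X}(-N)\twoheadrightarrow\F$ with kernel $\K$; together with an algebra homomorphism $\tau$ coming from a reduction of structure group, which is the same as a semistable $G$-bundle $P$ with $\rho_*(P)\cong\F$. So I would first identify the deformation theory of $\N$ with the simultaneous deformation theory of the pair $(\text{quotient }W\otimes\struct{X}(-N)\twoheadrightarrow\F,\ G\text{-bundle }P)$, compatibly with the isomorphism $\rho_*(P)\cong\F$. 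Concretely: to lift $p$ over a small extension $0\to I\to A'\to A\to 0$ one must (i) lift the $G$-bundle $P_A$ to $X_{A'}$, and (ii) lift the quotient; these must be matched up under $\rho_*$.

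The key steps, in order. First, invoke Theorem~\ref{t:ill}: since $G\times X\to X$ is smooth and $X$ is a curve, $P_A$ always extends to $X_{A'}$, with the set of lifts a torsor under $\coh{1}{X_A,\ad(P_A)\otimes I}$ and \emph{no} obstruction (the relevant $H^2$ vanishes on a curve). Second, for the quot-scheme part I would note that $\Q$ is smooth at quotients $\F$ with $\Ext^1(\K,\F)=0$; here one checks that this vanishing holds for the quotients occurring in $\N$. Indeed $\K$ and $\F$ are locally free on the curve $X$, $N$ was chosen large (so $\F(N)$, hence $\F$, is sufficiently positive and $\coh{1}{X,\F(m)}=0$ for the bundles in $\B$), and $\struct{X}(-N)\otimes W$ surjects onto $\F$; a Serre-duality/degree estimate shows $\Ext^1(\K,\F)=\coh{1}{X,\K^\vee\otimes\F}=0$ once $N$ is large enough. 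Third — the heart of the matter — I must show the full deformation problem for $\N$ is unobstructed, not merely the two pieces separately. For this I would set up the relative picture: over $U\subset\Q$ sits the universal $\F^{\text{univ}}$, and $\N'\to U$ parameterises the algebra maps $\tau$; restricting to $\N$ these $\tau$ correspond exactly to reductions of structure group of $\isom(\struct{X}^n,\F^{\text{univ}})$ to $G$. A deformation of $p$ is then a compatible pair consisting of a deformation of $\F$ inside $\Q$ and a deformation of the section defining the reduction. Since a reduction of structure group to $G$ is the same as a $G$-bundle $P$ with a chosen isomorphism $\rho_*(P)\cong\F$, and since lifting $P$ is unobstructed while $\Ext^1(\K,\F)=0$ makes lifting $\F$ unobstructed, the only remaining issue is matching: given a lift of $\F$ and a lift of $P$, one can adjust (using that $\mathrm{Aut}$ of the ambient $\Gln$-bundle acts transitively enough on the lifts, equivalently that $\rho_*$ on first cohomology $\coh{1}{X_A,\ad(P_A)\otimes I}\to\coh{1}{X_A,\shom(\F,\F)\otimes I}$ together with the quot-scheme tangent space covers all lifts of $\F$) to make the two compatible. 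Hence the obstruction space for $\N$ vanishes.

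The main obstacle I anticipate is precisely this last compatibility step: assembling the two unobstructed deformation problems into a single unobstructed one requires a careful comparison of the relevant cohomology exact sequences (the one from $0\to\ad(P)\to\shom(\F,\F)\to\shom(\F,\F)/\ad(P)\to 0$, or the analogous sequence governing the reduction) and checking that the obstruction to the combined lift lands in a group that vanishes on the curve $X$. The auxiliary point that $\Ext^1(\K,\F)=0$ for the quotients in question also needs the large-$N$ hypothesis from the construction in section~\ref{s:parameter}, so I would be careful to record exactly which positivity is used. Once these are in place, smoothness of $\N$ at $p$ follows because every infinitesimal deformation lifts.
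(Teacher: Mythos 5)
Your overall plan matches the paper's proof: both rest on the same three ingredients, namely (i) lifting the $G$-bundle $P_A$ is unobstructed because $H^2$ vanishes on a curve, (ii) lifting the quot-scheme point is unobstructed because $\Ext^1(\K,\F)=0$, and (iii) a matching step that reconciles the two lifts. Your description of (iii) in terms of the quot-scheme tangent map covering all lifts of $\F$ is exactly what the paper uses: it exhibits a surjection
$$
e:\Hom(\K_A,\F_A\otimes I)\twoheadrightarrow\Ext^1(\F_A,\F_A\otimes I)
$$
(coming from $\Ext^1(\struct{X}(-N)\otimes W,\F_A\otimes I)=0$), so that once $P_{A'}$ is chosen one can adjust the lift of $q_A$ by some $x$ with $e(x)+\F_{A'}=\psi(P_{A'})$.

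Two small caveats. First, your suggested route to $\Ext^1(\K,\F)=0$ via ``a Serre-duality/degree estimate on $\coh{1}{X,\K^\vee\otimes\F}$'' is shakier than it looks: $\K$ need not be semistable, so a raw degree bound does not by itself kill $H^1$. The paper avoids this by running the long exact sequence of $\Ext(-,\F)$ attached to $0\to\K\to\struct{X}(-N)\otimes W\to\F\to0$, using $\Ext^1(\struct{X}(-N)\otimes W,\F)\cong H^1(X,\F(N))\otimes W^\vee=0$ (the quot-scheme regularity choice) and $\Ext^2(\F,\F)=0$ on a curve; you should use that instead. Second, your parenthetical ``$\mathrm{Aut}$ of the ambient $\Gln$-bundle acts transitively enough'' is not the right mechanism — automorphisms of $\F_{A'}$ do not move you between different lifts $\F_1\neq\F_2$ of $\F_A$; the correct statement is precisely the surjectivity of $e$, which you do state as your ``equivalently.'' With those adjustments your argument is the paper's argument.
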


\begin{proof}
 Let $p$ correspond to the pair $(q:\struct{X}(-N)\otimes W\rightarrow \F,\sigma)$.
Let $n$ be the rank of $\F$ and let $P$ be the bundle corresponding to $\sigma$.
So $P$ is defined by a cartesian diagram
$$
\xymatrix{
\isom(\struct{X}^n,\F) \ar[r] & \isom(\struct{X}^n,\F)/G \\
P \ar[u] \ar[r] & X \ar[u]^\sigma
}
$$
Consider an extension of local Artinian $k$-algebras 
$$
0\rightarrow I \rightarrow A'\rightarrow A \rightarrow 0
$$
with $I^2=0$ and extensions $q_A,\F_A,P_A, \sigma_A$ of the above data to 
$X_{A}$. We need to extend $q_A$ and $P_A$ to $X_{A'}$ in a compatible way. 
Note that $\Ext^1(\K,\F)=0$ since 
$$\Ext^1(\struct{X}(-N)\otimes W, \F)\to \Ext^1(\K,\F)\to \Ext^2(\F,\F)$$
is exact, $\Ext^1(\struct{X}(-N)\otimes W,\F)\cong
H^1(X,\F(N))\otimes W^{\vee}=0$ by the choice of the quot scheme in \ref{s:parameter} and $\Ext^2(\F,\F)=0$ since $X$ is a curve.
Then, by the last proposition, $\F_A$ extends to a flat family.
Let $S$ be the set of extensions of $\F_{A}$  to $X_{A'}$ and $T$ the set of extensions
of $P_A$ to $X_{A'}$. We have a map given by extension of structure group
$$
\psi : T \rightarrow S.
$$ 
We need to choose $q_{A'}$ and $P_{A'}$ so that $\psi(P_{A'})=\F_{A'}$.
The set $S$ is a torsor under $H^1(X_A,\ad(\F)\otimes I)=\Ext^1(\F_A,\F_A\otimes I)$. By the choice of
$N$ in \ref{s:parameter}, we see that $\Ext^1(\struct{X}(-N)\otimes W,\F_A\otimes I)
\cong H^1(X,\F_A(N)\otimes I)\otimes W^{\vee}=0$ so that there is a surjection
$$
e:\text{Hom}(\K_A,\F_A\otimes I)\rightarrow \text{Ext}^1(\F_A,\F_A\otimes I)
$$
arising from the exact sequence
$$
0\rightarrow \K_A\rightarrow \struct{X_A}(-N)\otimes W \rightarrow \F_A
\rightarrow 0. 
$$
 Furthermore, $\text{Hom}(\K_A,\F_A\otimes I)$ classifies
extensions of $q_A$ to $X_{A'}$. One checks if we have two extensions $q_1$ and
$q_2$ then the classes in $S$ of $\F_1$ and $\F_2$ satisfy
$$
e(q_2-q_1) + \F_1 = \F_2\quad\text{in }S, 
$$
where $+$ sign denotes the torsor action described in \ref{t:ill}.
We have
$$
\coh{1}{X_A, \text{ad}({P_A})\otimes I} \subseteq \coh{1}{X_A, \text{ad}({\F_A})\otimes I}
=\text{Ext}^1(\F_A,\F_A\otimes I).
$$
Choose a lift $(q_{A'},\F_{A'})$ of $(q_A,\F_A)$ and a lift $P_{A'}$ of $P_A$. We can hence
find an element $x$ of $\text{Hom}(\K_A,\F_A\otimes I)$ so that
$$e(x)+ \F_{A'} = \psi(P_{A'}).$$
\end{proof}

\subsection{The Slice Theorem}

Consider a quotient of the form
$$
0\rightarrow \K\rightarrow \struct{X}(-N)\otimes W
\rightarrow \struct{X}^n\rightarrow 0
$$
and the canonical section $\sigma:X\rightarrow X \times \Gln/G$.
This pair gives a point $0$ of $\N$ which corresponds to the 
trivial $G$ bundle $G\times X\to X$
and we wish to apply Luna's
\'etale slice theorem to the action of ${\rm GL}(W)$ on $\N$ near
$0$. 

Firstly notice that the surjection
identifies
$$
W\cong\coh{0}{X,\struct{X}^n(N)}.
$$
This in turn gives a direct sum decomposition to $W$
which allows us to identify $\Gln$ with a subgroup of ${\rm GL}(W)$.

The discussion of the preceeding subsection allows us to identify the
Zariski tangent space of $\N$ at $0$ with the following fibred product
in the category of vector spaces

$$
\xymatrix{
 T_0(\N) \ar[r] \ar[d]              & \coh{1}{X,\lie(G)\otimes\struct{X}} \ar[d] \\
\text{Hom}(\K,\struct{X}^n) \ar[r] & \coh{1}{X,\lie(\Gln)\otimes\struct{X}}.
}
$$

Note here that 
$$H^1(X,\ad(G\times X))=H^1(\lie(G)\otimes \struct{X})\cong \lie(G)^g.$$
Using relevant cohomology sequences, we see that the vertical arrows are injective and the horizontal ones surjective.
Note also that 
$$\Hom(\struct{X}(-N)\otimes W,\struct{X}^n)\cong W^{\vee}\otimes W\cong \lie({\rm GL}(W))$$

The differential of the orbit map at $0$ gives a morphism
$$
d\text{orb}_0 : \lie(\text{GL}(W)) \rightarrow T_0(\N).
$$
Composing this with the inclusion into $\text{Hom}(\K,\struct{X}^n)$
and recalling the definition of $W$, 
we may identify it with the natural map 
$$
\text{Hom}(\struct{X}(-N)\otimes W,\struct{X}^n)\rightarrow \text{Hom}(\K,\struct{X}^n).
$$
Putting this all together we have

\begin{proposition} 
\label{p:normal}
Let $0$ be a point of $\N$ corresponding
to the trivial $G$-bundle.
 \begin{enumerate}
  \item The stabiliser of  $0$ inside $\text{GL}(W)$ is $G$.
  \item The normal space to $\orb(0)$ can be identified with
$$
\coh{1}{X,\ad(G\times X)} = \underbrace{\lie(G)\times\ldots\times \lie(G)}_{g\text{ times}}.
$$
  \item The natural action of the stabiliser on the normal bundle can be identified with 
the adjoint action.
 \end{enumerate}
\end{proposition}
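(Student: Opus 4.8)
The plan is to verify the three assertions in turn, with most of the work already done in the preceding subsection. For part (1), I would use the cartesian diagram defining a point of $\N$ together with the interpretation of the $\text{GL}(W)$-action. The stabiliser of $0$ consists of those automorphisms of $W$ that fix both the quotient $\struct{X}(-N)\otimes W\rightarrow \struct{X}^n$ and the section $\sigma$ defining the trivial $G$-bundle. Since $W\cong \coh{0}{X,\struct{X}^n(N)}$, an automorphism fixing the quotient is the same as an automorphism of $\struct{X}^n$ coming from a constant matrix, i.e. an element of $\Gln$ (this is exactly the identification of $\Gln$ as a subgroup of $\text{GL}(W)$ made above). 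The further condition that $\sigma$ be preserved cuts this down to the automorphisms of $\struct{X}^n$ that respect the reduction of structure group to $G$, which is precisely $G$ acting by left multiplication. So the stabiliser is $G$.

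For part (2), I would combine Proposition on smoothness of $\N$ (so that $T_0(\N)$ is the honest tangent space) with the fibred-product description of $T_0(\N)$ displayed above and the computation of $d\orb_0$. The orbit $\orb(0)$ has tangent space the image of $d\orb_0$, which we identified with the image of $\text{Hom}(\struct{X}(-N)\otimes W,\struct{X}^n)\rightarrow \text{Hom}(\K,\struct{X}^n)$; from the exact sequence $0\rightarrow \K\rightarrow \struct{X}(-N)\otimes W\rightarrow \struct{X}^n\rightarrow 0$ and the vanishing $\Ext^1(\struct{X}(-N)\otimes W,\struct{X}^n)\cong H^1(X,\struct{X}^n(N))\otimes W^\vee = 0$ from the choice of $N$, this map is surjective. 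Hence, in the fibred product defining $T_0(\N)$, the orbit direction is exactly the preimage of $0$ in $\coh{1}{X,\lie(G)\otimes\struct{X}}$ under the top horizontal map, and the quotient $T_0(\N)/T_0(\orb(0))$ maps isomorphically onto $\coh{1}{X,\lie(G)\otimes\struct{X}} = H^1(X,\ad(G\times X))$. The identification of the latter with $\lie(G)^g$ is the Künneth/de Rham computation $H^1(X,\struct{X})\cong k^g$ already recorded above.

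For part (3), I would track the $G$-equivariance through these identifications. The stabiliser $G$ acts on $T_0(\N)$ by the derivative of its action on $\N$, and each of the four corners of the fibred product carries a compatible $G$-action; on $\coh{1}{X,\lie(G)\otimes\struct{X}} = H^1(X,\struct{X})\otimes\lie(G)$ this action is trivial on the $H^1(X,\struct{X})$ factor (the action on the base bundle is trivial) and the adjoint action on $\lie(G)$, since deformations of the trivial $G$-bundle are classified by $\ad(G\times X) = \lie(G)\otimes\struct{X}$ with $G$ acting via $\ad$. Passing to the normal space, which is this $\coh{1}$ as shown in part (2), the induced action is therefore $\ad$ on each of the $g$ copies of $\lie(G)$, as claimed.

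The main obstacle is part (3): one must be careful that the $G$-action used in Luna's slice theorem—namely the linearised action of the stabiliser on the normal space to the orbit—really does match the adjoint action on $H^1(X,\ad(G\times X))$ under the deformation-theoretic identification, rather than some twist of it. This is essentially the statement that the torsor structure of Theorem \ref{t:ill} is $G$-equivariant for the action of automorphisms of $P$, which follows from functoriality of the cohomology $\coh{1}{X_A,\ad(P_A)\otimes I}$ in the bundle $P_A$, but it should be spelled out. The remaining calculations (identifying constant automorphisms, checking surjectivity of the relevant $\Ext$ map, computing $H^1(X,\struct{X})$) are routine given the setup.
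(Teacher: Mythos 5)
Your treatment of (1) and (2) is sound and supplies detail the paper waves off as ``straightforward.'' One small imprecision in (2): the vanishing $\Ext^1(\struct{X}(-N)\otimes W,\struct{X}^n)=0$ does \emph{not} make the map $\Hom(\struct{X}(-N)\otimes W,\struct{X}^n)\rightarrow\Hom(\K,\struct{X}^n)$ surjective (its cokernel is all of $\Ext^1(\struct{X}^n,\struct{X}^n)$) — rather it makes the connecting map $\Hom(\K,\struct{X}^n)\rightarrow\Ext^1(\struct{X}^n,\struct{X}^n)\cong\coh{1}{X,\lie(\Gln)\otimes\struct{X}}$ surjective, which is what you actually need: the image of $d\orb_0$ is the kernel of that connecting map, and passing to the quotient in the fibred square gives the isomorphism with $\coh{1}{X,\lie(G)\otimes\struct{X}}$. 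Your conclusion is right; only the pronoun reference to ``this map'' is off.

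For (3) you take a genuinely different route from the paper. You argue top-down: the torsor structure of Theorem~\ref{t:ill} is functorial in $P_A$, deformations of the trivial $G$-bundle live in $H^1(X,\struct{X})\otimes\lie(G)$ with $G$ acting trivially on the cohomology factor and by $\ad$ on $\lie(G)$, so the linearised stabiliser action on the normal space must be $\ad$. You rightly flag this equivariance as the ``main obstacle'' that ``should be spelled out.'' The paper instead makes it concrete: it first reduces to $G=\Gln$ (the full quot scheme, into which the $G$-case embeds $G$-equivariantly via the fibred-product description), writes out the bijection between $\phi\in\Hom(\K,\struct{X}^n)$ and lifted subsheaves $\tilde\K=\{x+\epsilon y\mid\phi(x)=\bar y\}$ over the dual numbers, and traces a $g\in\Gln$ through the three-term exact sequence to find that the induced action on $\Hom(\K,\struct{X}^n)$ is $\phi\mapsto g\circ\phi\circ g^{-1}$, hence adjoint on the quotient. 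Your functoriality argument is shorter and avoids the reduction step, but it leaves open precisely the equivariance you yourself identify as the crux; the paper's explicit dual-number computation is what closes that gap. If you were writing this up you would want to replace the appeal to functoriality with either the paper's calculation or an equally concrete verification that the $\coh{1}$-identification intertwines the two actions.
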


\begin{proof}
 The first two statements are straightforward.

It suffices to prove (3) in the case where $G=\Gln$, that is
the full quot scheme. To see it in this case, let us first spell out the 
correspondence between $\text{Hom}(\K,\struct{X}^n)$ and lifts of 
the extension
$$
0\rightarrow \K\rightarrow \struct{X}(-N)\otimes W
\rightarrow \struct{X}^n\rightarrow 0
$$
to the ring of dual numbers. To give a lift of the extension
is the same as giving a subsheaf
$$\tilde{\K}\subseteq \struct{X'}(-N)\otimes W,$$
lifting the subsheaf $\K$. One defines the subsheaf
corresponding to a homomorphism
$$\phi:\K\rightarrow \struct{X}^n$$
by 
$$
\tilde{\K} =\{x+\epsilon y\mid \phi(x) = \bar{y}\}
$$
where $\bar{y}$ is the image inside $\struct{X}^n$.
A group element $g\in \Gln$ produces a diagram
$$
\xymatrix
{
0\ar[r] &\K \ar[r] \ar[d]^g & \struct{X}(-N)\otimes W \ar[r] \ar[d]^g& \struct{X}^n  \ar[r] \ar[d]^g&0\\
0\ar[r] &\K \ar[r] & \struct{X}(-N)\otimes W \ar[r] & \struct{X}^n  \ar[r] &0,
}
$$
by our identification of $\Gln$ with a subgroup of ${\rm GL}(W)$ and the
five lemma. Tracing through the identifications above we find that 
the action on $\text{Hom}(\K,\struct{X}^n)$ is given by
$$
\phi\mapsto g\circ\phi\circ g^{-1}.
$$
This induces the adjoint action on the quotient
$$
\text{Hom}(\K,\struct{X}^n) \rightarrow \coh{1}{X,\lie(\Gln)\otimes\struct{X}}
\rightarrow 0.
$$
\end{proof}

\begin{notation}
 We write 
$$
Z(\lie(G),g) = \lie(G)\times\ldots\times\lie(G).
$$
Frequently we will abuse notation and just write $Z$ or $Z(\lie(G))$
when it is clear from the context what is meant.
\end{notation}

\begin{corollary}
 The completion of the local ring of $\moduli{G}$ at the trivial bundle 
is isomorphic to $\widehat{\struct{Z/\!\!/G,0}}$. 
\end{corollary}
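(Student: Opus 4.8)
The plan is to deduce this from Luna's \'etale slice theorem, applied to the action of $\mathrm{GL}(W)$ on $\N$ at the point $0$ corresponding to the trivial bundle, together with the computation of the normal space already carried out in Proposition \ref{p:normal}. First I would check that the hypotheses of the slice theorem hold at $0$. The stabiliser of $0$ is $G$ by Proposition \ref{p:normal}(1), which is reductive, and $\N$ is smooth at $0$ by the smoothness proposition above. The remaining point to verify is that the orbit $\orb(0)$ is closed in $\N$: the trivial $G$-bundle is polystable (equivalently, its Jordan--H\"older filtration is trivial), hence it represents the unique closed orbit in its fibre under the good quotient $\N\to\moduli{G}$. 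Replacing $\N$, if necessary, by an affine $\mathrm{GL}(W)$-stable open neighbourhood of $\orb(0)$ that is saturated for this good quotient, we are in the setting where the slice theorem applies; this is the step that forces characteristic zero, since it uses that $\mathrm{GL}(W)$ and $G$ are linearly reductive.

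Next I would invoke the theorem to produce a locally closed affine $G$-stable subscheme $S\subseteq\N$ through $0$ such that $\mathrm{GL}(W)\times_G S\to\N$ is \'etale onto a saturated open neighbourhood of $\orb(0)$. Passing to GIT quotients, the induced morphism
$$
S/\!\!/G\longrightarrow \N/\!\!/\mathrm{GL}(W)=\moduli{G}
$$
is \'etale and carries the image of $0$ to the point of $\moduli{G}$ classifying the trivial bundle. Since $\N$ is smooth at $0$, so is $S$, and the smoothness refinement of the slice theorem then yields a $G$-equivariant \'etale morphism from a neighbourhood of $0$ in $S$ onto a neighbourhood of $0$ in the tangent space $T_0(S)$. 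By Proposition \ref{p:normal}(2) and (3) this tangent space is the normal space $\coh{1}{X,\ad(G\times X)}=Z(\lie(G),g)$ with $G$ acting through the adjoint action, so after taking quotients we obtain a second \'etale morphism
$$
S/\!\!/G\longrightarrow Z(\lie(G),g)/\!\!/G = Z/\!\!/G,
$$
again matching the distinguished points.

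Finally, an \'etale morphism induces an isomorphism on completed local rings at a point and its image, so chaining the two \'etale morphisms
$$
\moduli{G}\longleftarrow S/\!\!/G\longrightarrow Z/\!\!/G
$$
gives
$$
\widehat{\struct{\moduli{G},0}}\;\cong\;\widehat{\struct{S/\!\!/G,0}}\;\cong\;\widehat{\struct{Z/\!\!/G,0}},
$$
which is the desired statement.

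The step I expect to be the real obstacle is not the application of the slice theorem but the verification of its hypotheses: one must be certain that $\orb(0)$ is closed in $\N$, and one must arrange an affine $\mathrm{GL}(W)$-stable saturated neighbourhood of this orbit on which the good quotient behaves well, since $\N$ is only an open subscheme of the full parameter space of section \ref{s:parameter}. Once these points are in place, the conclusion is a formal consequence of the slice theorem together with the identification of the normal space in Proposition \ref{p:normal}.
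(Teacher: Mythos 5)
Your proposal is correct and is exactly the argument the paper has in mind; the paper's proof is simply the one-line citation ``this is just an application of the slice theorem, see [Dr\'ezet],'' and your write-up unpacks precisely that application (verifying reductive stabiliser, smoothness, closed orbit, then chaining the two \'etale maps through the slice and invoking Proposition~\ref{p:normal} for the normal space). No substantive difference in approach.
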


\begin{proof}
 This is just an application of the slice theorem, see
\cite{drezet:04}.
\end{proof}

\section{Geometric Quotients and Azumaya Algebras}
\label{s:azu}

We let $\Lambda=k<x_1,\ldots,x_g>$ be a polynomial ring in $g$
noncommuting variables. If $R$ is a $k$-algebra then an $R$-point
of $Z(\lie(\Gln))$ is equivalent to giving a $R\otimes \Lambda$-module
structure on $R^n$.

Fix a vector space $V$ with a nondegenerate bilinear form $B$ that is 
either symmetric or alternating. The form $B$ determines an anti-automorphism
$r$ of $\End_k(V)$ determined by 
$$
B(v,f(w)) = B(rf(v),w)
$$
for all $f\in\text{End}_k(V)$ and $v,w\in V$. We denote the corresponding 
group of automorphisms of $V$ preserving the form by $G$. To give an 
$R$-point of $Z(\lie(G))$ is the same as giving a  $R\otimes \Lambda$-module
structure on $R\otimes V$ such that if we view $x_i\in\text{End}_R(R\otimes V)$ we have
$r(x_i) = - x_i$.

\subsection{Stability}

In this subsection we will give a  description of  the stable locus of $Z(\lie(G))$ for the adjoint action
of $G$ in terms of the functor of points description above. We denote
the stable locus by $Z(\lie(G))^s$. Fix a representation $V$ of $G$ and $v\in V$
and a one parameter subgroup $\lambda : \gm \rightarrow G$.
We can decompose $v =\sum_i v_{n_i}$ where $v_{n_i}\ne 0$ has weight $n_i$.
One defines
$$
\mu(v,\lambda) = -\min\{n_i\}.
$$

\begin{proposition}
\label{p:simple}
 We have $x\in Z(\lie(\Gln))^s$ if and only if the corresponding $\Lambda$-module
is simple.
\end{proposition}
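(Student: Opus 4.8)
The plan is to unwind the GIT stability condition for the adjoint action of $\Gln$ on $Z(\lie(\Gln))$ via the Hilbert–Mumford criterion, and then translate ``no destabilizing one-parameter subgroup'' into ``no proper nonzero submodule'' for the associated $\Lambda$-module structure on $k^n$. Recall from Section~\ref{s:azu} that a point $x\in Z(\lie(\Gln))$ is the same datum as an $n$-tuple of endomorphisms $(x_1,\dots,x_g)$ of $k^n$, equivalently a $\Lambda$-module structure on $k^n$. A one parameter subgroup $\lambda:\gm\to\Gln$ gives a weight decomposition $k^n=\bigoplus_{j} V_j$; applying the $\mu$-function of the preceding paragraph to the adjoint representation on $Z(\lie(\Gln))=\lie(\Gln)^g$ one computes that $\lambda$ conjugates each $x_i$ to have components $\epsilon^{a-b}$ on $\Hom(V_b,V_a)$, so $\lim_{t\to 0}\lambda(t)\cdot x$ exists precisely when each $x_i$ preserves the decreasing flag $F_{\ge m}=\bigoplus_{j\ge m}V_j$, i.e. when the $\Lambda$-module $k^n$ has a filtration by submodules $F_{\ge m}$ with the $\lambda$-weights as jumps.

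The main step is then the two implications. For the easy direction, if the $\Lambda$-module is \emph{not} simple, pick a proper nonzero submodule $U\subsetneq k^n$ and take the one parameter subgroup $\lambda$ acting as $\epsilon$ on $U$ and trivially on a complement; this is a well-defined map to $\Gln$ after a harmless central twist to land in $\Gln$ rather than merely $\Gln$-up-to-center (note $Z(\lie(\Gln))$ carries only the $\mathrm{PGL}_n=\Gln/\gm$ action, so central subgroups act trivially and we must instead normalize so $\lambda$ lands in $\Sln$ or work modulo center), and one checks $\mu(x,\lambda)\le 0$ while $\lambda$ is nontrivial in $\mathrm{PGL}_n$, so $x$ is not stable. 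Conversely, suppose the module is simple but $x$ is not stable: there is a nontrivial $\lambda$ in $\mathrm{PGL}_n$ with $\mu(x,\lambda)\le 0$, forcing (by the weight computation above) each $x_i$ to preserve a nontrivial flag $0\subsetneq F_{\ge m}\subsetneq k^n$ for some $m$ — lifting $\lambda$ to $\Gln$, nontriviality in $\mathrm{PGL}_n$ means at least two distinct weights occur, so such a proper nonzero $F_{\ge m}$ exists — and $F_{\ge m}$ is then a proper nonzero $\Lambda$-submodule, contradicting simplicity. Thus stability is equivalent to simplicity.

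The point needing the most care is the discrepancy between $\Gln$ and $\mathrm{PGL}_n$ in the Hilbert–Mumford computation: the action on $Z(\lie(\Gln))$ factors through $\mathrm{PGL}_n$, so ``nontrivial one parameter subgroup'' must mean nontrivial modulo the center, and one must verify that a nontrivial $\lambda$ in $\mathrm{PGL}_n$ genuinely produces at least two distinct $\lambda$-weights on $k^n$ (hence a proper nonzero flag step), which is immediate since a one parameter subgroup of $\Gln$ with all weights equal is central. I expect the weight bookkeeping for $\ad$ on the $g$-fold product — verifying that the limit exists iff \emph{every} $x_i$ preserves \emph{the same} flag, and that $\mu(x,\lambda)\le 0$ is equivalent to existence of the limit here because the $\ad$-weights are differences $a-b$ and so are $\le 0$ exactly on the flag-preserving locus — to be the only genuinely computational part, and it is routine.
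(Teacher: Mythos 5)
Your argument is essentially the paper's: both unwind the Hilbert--Mumford criterion for the adjoint action, compute that $\mu(x,\lambda)\le 0$ is equivalent to each $x_i$ being block upper triangular with respect to the weight flag of $\lambda$, and identify that flag with a chain of $\Lambda$-submodules (and conversely, a proper submodule yields a parabolic, hence a destabilizing $\lambda$). The one place you go beyond what is written in the paper is your explicit handling of the $\Gln$ versus $\mathrm{PGL}_n$ issue: since the adjoint action kills the centre, one must restrict to one-parameter subgroups that are nontrivial modulo $\gm$ (equivalently, have at least two distinct weights) before concluding that the resulting flag has a proper nonzero step; the paper leaves this implicit in the phrase ``the nonzero integers in the list $r_i-r_j$,'' which tacitly assumes the $r_i$ are not all equal. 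Making this explicit is a genuine, if small, improvement in rigor; the substance and route of proof are otherwise identical.
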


\begin{proof}
 Suppose that $x=(x_1,x_2,\ldots,x_g)$ is not stable. Then there a 1-parameter subgroup
$\lambda:\gm\rightarrow\Gln$ with $\mu(x,\lambda)\le 0$, see \cite[page 49]{git:94}. We may assume that
$\lambda$ is diagonal or even
$$
\lambda(t) = \left(
\begin{array}{cccc}
 t^{r_1} & & &\\
         & t^{r_2} &  & \\
         &         &  \ddots & \\
         &         &   & t^{r_n}
\end{array}
\right)
$$
with $r_1\ge r_2\ge\ldots\ge r_n$.
Then the fact $\mu(x,\lambda)\le 0$ translates into the fact that each of the
matrices $x_i$ are of the form
$$
 \left(
\begin{array}{cccc}
 * & * & * & *\\
 0  & * & * & * \\
  \cdots& \cdots &  \cdots & \cdots\\
     0    &     0    &  0 & *
\end{array}
\right)
$$
where the size of the blocks $*$ is determined by the nonzero integers in the list 
$r_i-r_j$ with $i>j$. At any rate, one sees that $V$ has a submodule.

When $V$ is not simple, then $V$ has a submodule $W$. The subgroup of $\Gln$
preserving the flag $W\subseteq V$ is a parabolic subgroup. One easily
reverses the above argument to construct a one parameter subgroup with non-positive 
$\mu$.
\end{proof}
We will call a subspace $U$ of a vector space $V$ equipped with
a nondegenerate symmetric or alternating quadratic form $B$
\emph{totally isotropic with respect to $B$} if the restriction of $B$ to $U$ is zero.

\begin{proposition}
\label{p:stableiso}
 We fix a nondegenerate symmetric or alternating bilinear form $B$ on $V$
and let $G$ be the corresponding automorphism group preserving the 
form.
A point $x\in Z(\lie(G))$ is in the stable locus if and only if the
corresponding $R\otimes \Lambda$-module structure on $R\otimes V$ has no nontrivial submodules that
are totally isotropic with respect to  $B$.
\end{proposition}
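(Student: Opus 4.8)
The plan is to run the Hilbert--Mumford criterion for the adjoint action of $G$, just as in the proof of Proposition \ref{p:simple}, while keeping track of the extra structure imposed by the form $B$. Recall that $x=(x_1,\dots,x_g)\in Z(\lie(G))$ fails to be stable precisely when there is a nontrivial one parameter subgroup $\lambda:\gm\to G$ with $\mu(x,\lambda)\le 0$, where $\mu(x,\lambda)=-\min\{n_i\}$ and the $n_i$ are the weights occurring in the $\ad\lambda$-decomposition of the $x_i$. Since both the stable locus and the property of possessing a nonzero totally isotropic submodule are $G$-invariant, I may conjugate $\lambda$ into a fixed maximal torus of $G$; in a basis of $V$ adapted to the form for which this torus is diagonal, $\lambda(t)=\diag(t^{r_1},\dots,t^{r_n})$ with $r_1\ge\cdots\ge r_n$, the multiset $\{r_j\}$ is invariant under negation, and $B(e_j,e_l)=0$ whenever $r_j+r_l\neq 0$ because $\lambda$ lands in $G$.

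For the ``only if'' direction, suppose $x$ is not stable and choose such a $\lambda$. Exactly as in Proposition \ref{p:simple}, the inequality $\mu(x,\lambda)\le 0$ forces every $x_i$ to be block upper triangular with respect to the decreasing weight filtration, so $V_{>0}:=\bigoplus_{r_j>0}k e_j$ is a submodule. It is nonzero since $\lambda$ is nontrivial and the exponents occur in $\pm$ pairs, proper since $B$ is nondegenerate, and totally isotropic since $B(e_j,e_l)=0$ whenever $r_j,r_l>0$. This is the submodule we want.

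For the ``if'' direction, suppose $U\subseteq V$ is a nonzero totally isotropic submodule. The key point is that $U^{\perp}$ is then again a submodule: for $u\in U^{\perp}$, $w\in U$ we have, using $r(x_i)=-x_i$, that $B(x_iu,w)=B(u,r(x_i)w)=-B(u,x_iw)=0$ because $x_iw\in U$, so $x_iu\in U^{\perp}$. Hence $0\subseteq U\subseteq U^{\perp}\subseteq V$ is a filtration by submodules. Pick a Witt decomposition $V=U\oplus W_0\oplus U^{\ast}$ adapted to $U\subseteq U^{\perp}$, so that $U^{\perp}=U\oplus W_0$, the form restricts to a perfect pairing of $U$ with $U^{\ast}$ and to a nondegenerate form on $W_0$, and define $\lambda(t)$ to act as $t$ on $U$, as the identity on $W_0$, and as $t^{-1}$ on $U^{\ast}$. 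Then $\lambda$ preserves $B$ (and has determinant one), so $\lambda:\gm\to G$, and it is nontrivial as $U\neq 0$. Since $x_i(U)\subseteq U$ and $x_i(U^{\perp})\subseteq U^{\perp}$, none of the components of $x_i$ lowers the $\lambda$-weight, so all weights of the $x_i$ under $\ad\lambda$ are $\ge 0$ and $\mu(x,\lambda)\le 0$; thus $x$ is not stable.

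The new ingredient beyond Proposition \ref{p:simple} is the use of the anti-automorphism $r$ to see that the perpendicular of a submodule is again a submodule, together with the translation, in both directions, between isotropic flags and one parameter subgroups of $G$ rather than of $\Gln$. I expect the only delicate bookkeeping to be the verification that the one parameter subgroup manufactured from the Witt decomposition really does land in $G$ (and, in the orthogonal case, in its identity component), and conversely that an arbitrary one parameter subgroup of $G$ can be diagonalised in the adapted form used above; both are part of the standard structure theory of the classical groups, which is why we can afford to be brief.
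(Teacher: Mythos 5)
Your proof is correct and follows the same Hilbert--Mumford strategy as the paper, but your bookkeeping is cleaner and more uniform. The paper works out only the symplectic case in coordinates, reads off that the lower-left block of each $x_i$ vanishes, and then invokes Witt's extension theorem (extending a totally isotropic submodule to a maximal totally isotropic subspace in standard position) to reverse the implication; you instead make the observation -- not stated in the paper -- that the anti-involution $r$ forces $U^\perp$ to be a submodule whenever $U$ is, which immediately hands you an isotropic flag $0 \subseteq U \subseteq U^\perp \subseteq V$ of submodules and lets you manufacture the destabilizing one parameter subgroup directly from a Witt decomposition, treating the symplectic and orthogonal cases simultaneously. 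Your use of $V_{>0}$ rather than ``the span of the first $n$ basis vectors'' in the forward direction also quietly repairs the paper's assertion that the blocks $C_i$ vanish, which is not quite accurate when the one parameter subgroup has a zero weight; and the claim $B(e_j,e_l)=0$ whenever $r_j+r_l\neq 0$, together with nondegeneracy of $B$, is exactly what guarantees both that $V_{>0}$ is totally isotropic and that the weight multiset is symmetric under negation, so the submodule you produce is nonzero and proper. In short: same proof, but your route through $U^\perp$ is the tidier way to package the appeal to Witt's theorem.
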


\begin{proof}
 One uses the proof of the previous proposition and keeps track of
what it means for the one parameter subgroup to be inside $G$. We give more details 
for the case of $\Sptn$ and leave the case of $\Sotn$ to the reader.

Given a one parameter subgroup
$$
\lambda(t) = \left(
\begin{array}{cccc}
 t^{r_1} & & &\\
         & t^{r_2} &  & \\
         &         &  \ddots & \\
         &         &   & t^{r_{2n}}
\end{array}
\right)
$$
with image inside $\Sptn$ (using the standard symplectic form)
by swapping basis vectors we may arrange weights so that
$r_1>r_2>\ldots>r_n$ and $r_i = - r_{i+n}$. Suppose that we have a point
$x=(x_1,x_2,\ldots,x_g)\in Z(\lie(\Sptn))$ such that 
$\mu(x,\lambda)\le 0$. Then if we write $x_i$ as $n\times n$ blocks, that is
$$
x_i = \left(
\begin{array}{cc}
 A_i & B_i\\
 C_i & D_i
\end{array}
\right)
$$
we must have $C_i=0$. Hence there is an totally isotropic submodule.
The argument is easily reversed using the analogue of Witt's extension
theorem for alternating bilinear forms~\cite[p. 391]{jacobson:85}. More explicitly, we use the fact that one can extend an totally isotropic subspace to a maximal
totally isotropic subspace and all such maximal totally isotropic subspaces are the same
up to automorphism.
\end{proof}

\begin{proposition}
  Suppose that $G=\Gln,\ \Sptn$ or  $\Sotn$.
We have a $G^{ad}$-principal bundle
$$
Z(\lie(G))^s \rightarrow Z(\lie(G))^s/G^{ad}.
$$
\end{proposition}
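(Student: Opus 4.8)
The plan is to produce a free action of $G^{ad}$ on $Z(\lie(G))^s$ and then invoke descent along the fppf (indeed fpqc) quotient map to conclude that $Z(\lie(G))^s\to Z(\lie(G))^s/G^{ad}$ is a principal $G^{ad}$-bundle. First I would fix a point $x\in Z(\lie(G))^s$ with corresponding $\Lambda$-module structure on $R\otimes V$ (or $R^n$, when $G=\Gln$), and compute its stabiliser in $G^{ad}$. An element of $G$ stabilising $x$ under the adjoint action is precisely an automorphism of $V$ commuting with all the $x_i$, i.e. a $\Lambda$-module automorphism of $R\otimes V$; in the orthogonal or symplectic case it must in addition preserve $B$. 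By Propositions~\ref{p:simple} and~\ref{p:stableiso}, stability forces the $\Lambda$-module to be simple (for $\Gln$) or, for $G=\Sptn,\Sotn$, to have no proper totally isotropic submodule. The key algebraic input is that such a module is forced to be either simple or (in the classical case) an orthogonal/symplectic direct sum governed by a semisimple endomorphism ring; in any case Schur's lemma plus the vanishing of nonzero totally isotropic submodules pins down $\End_\Lambda(R\otimes V)$ tightly enough that the module automorphisms compatible with $B$ are exactly the scalars $\gm\cap G$, which is the centre of $G$. Hence the stabiliser of $x$ in $G^{ad}=G/Z(G)$ is trivial, so the action of $G^{ad}$ on $Z(\lie(G))^s$ is set-theoretically free, and one checks freeness scheme-theoretically by running the same argument over an arbitrary test ring $R$ using the functor-of-points descriptions from Section~\ref{s:azu}.

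Next I would address properness and flatness of the orbit map. The action map $G^{ad}\times Z(\lie(G))^s\to Z(\lie(G))^s\times Z(\lie(G))^s$ is a monomorphism by freeness; since $G^{ad}$ is affine and $Z(\lie(G))^s$ is a quasi-affine variety over $k$ (an open subset of the affine space $Z(\lie(G))$), geometric invariant theory applies: the stable locus for a reductive group action on an affine variety admits a geometric quotient, the quotient map is affine and universally open, and its fibres are exactly the orbits (see \cite[Amplification 1.11, Ch.~1]{git:94}). Combined with freeness, the fibres are single free orbits, i.e. torsors, so the quotient $\pi\colon Z(\lie(G))^s\to Z(\lie(G))^s/G^{ad}$ is a flat, surjective, affine morphism whose base change along itself is $G^{ad}\times Z(\lie(G))^s$. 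That last isomorphism, $Z(\lie(G))^s\times_{Z(\lie(G))^s/G^{ad}}Z(\lie(G))^s\cong G^{ad}\times Z(\lie(G))^s$, is precisely the statement that $\pi$ is a principal $G^{ad}$-bundle in the fppf topology; since $G^{ad}$ is smooth, it is in fact étale-locally trivial.

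The main obstacle is the stabiliser computation, specifically showing scheme-theoretic (not just geometric) freeness over an arbitrary base ring $R$: one must show that any $R\otimes\Lambda$-module automorphism of $R\otimes V$ preserving $B$ is given by a central element of $G(R)$, for all test rings $R$, and this requires care because over a general $R$ the module $R\otimes V$ need not be ``simple'' in any naive sense — rather, simplicity is inherited from the closed fibre and must be propagated via the fact that $x\in Z(\lie(G))^s(R)$ means every geometric fibre lies in the stable locus. I would handle this by reducing to artinian local $R$ and then to the residue field via nilpotent/infinitesimal lifting, using that the centraliser subscheme is closed in $G^{ad}$ and hence determined by its fibres together with a tangent-space computation: the Lie algebra of the centraliser of $x$ is the space of $z\in\lie(G^{ad})$ with $[z,x_i]=0$ for all $i$, which one shows vanishes by the same Schur-type argument applied at the level of $\lie(G)$ acting on $V$. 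Once freeness is established functorially, the descent step is formal.
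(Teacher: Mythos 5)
Your overall strategy coincides with the paper's: show that stabilisers in $G^{\mathrm{ad}}$ are trivial on $Z(\lie(G))^s$, then use GIT properness of the action on the stable locus to promote the geometric quotient to a principal bundle. The paper does exactly this, citing \cite[Corollary~2.5, p.~55]{git:94} for properness and then checking trivial stabilisers; the later steps you spell out (flatness, the torsor isomorphism $Z^s\times_{Z^s/G^{\mathrm{ad}}}Z^s\cong G^{\mathrm{ad}}\times Z^s$, and \'etale-local triviality from smoothness of $G^{\mathrm{ad}}$) are more verbose than the paper but substantively the same.

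The genuine gap is in your stabiliser computation for $G=\Sptn,\Sotn$. You invoke Schur's lemma, but by Proposition~\ref{p:stableiso} stability only excludes nonzero \emph{totally isotropic} submodules, not all proper submodules; the $\Lambda$-module on $V$ need not be simple, so $\End_\Lambda(V)$ need not be $k$, and Schur's lemma does not apply. Your remark that the module is ``either simple or an orthogonal/symplectic direct sum'' gestures at the right picture, but you never extract from it the needed conclusion that $B$-compatible module automorphisms are scalars. The concrete argument the paper uses, and which you are missing, is an eigenspace computation: if $\alpha\in G$ stabilises $x$ then $\alpha$ commutes with all $x_i$, so each eigenspace $K=E_\lambda(\alpha)$ is a $\Lambda$-submodule; for $v,w\in K$ one has $B(v,w)=B(\alpha v,\alpha w)=\lambda^2 B(v,w)$, so whenever $\lambda^2\ne 1$ the submodule $K$ is totally isotropic and hence trivial by stability. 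This forces the eigenvalues of $\alpha$ into $\{\pm 1\}$ and then (with a little more work along the same lines) that $\alpha=\pm I$, i.e.\ $\alpha$ is central, giving a free $G^{\mathrm{ad}}$-action. Substituting this eigenspace argument for the Schur-type appeal would close your proof; the infinitesimal/scheme-theoretic freeness you worry about at the end is then handled by the GIT framework you already cite and needs no separate argument beyond what \cite{git:94} provides.
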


\begin{proof}
 We need to show that the orbit map 
$$
\Psi : Z(\lie(G))^s \times G \rightarrow Z(\lie(G))^s \times Z(\lie(G))^s
$$
is a closed immersion. The action on the stable locus is proper by \cite[page 55, Corollary 2.5]{git:94}.
We need to show that points have trivial stabilisers. For $\Gln$ this follows
from \ref{p:simple} . Suppose that $\alpha\in G$ fixes a point $x$. This amounts
to the fact that $\alpha$ induces a $\Lambda$-module automorphism of $V$.

Let $\lambda\ne 0$ be an eigenvalue of $\alpha$ and consider
the $\lambda$-eigenspace of $\alpha$, $K=E_{\lambda}(\alpha)$.
As $\alpha$ preserves  the form $B$ we have that $K$ is totally isotropic. Explicitly if 
$v,w\in K$ then
\begin{eqnarray*}
 B(v,w) &=& B(\alpha(v), \alpha(w)) \\
 &=& \lambda^2 B(v,w).
\end{eqnarray*}
So $B(v,w) = 0 $ and $K$ is totally isotropic.
 Hence $K=V$.
\end{proof}

\subsection{Nori's Obstruction to Universal Bundles}
\label{ss:nori}

\begin{proposition}
 There exists a split Azumaya algebra with an action of
$\text{PGL}_n$ on $Z(\lie(\Gln))$.
\end{proposition}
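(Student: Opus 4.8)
The plan is to realise the Azumaya algebra as the sheaf of $\struct{Z}$-linear endomorphisms of the universal module on $Z = Z(\lie(\Gln))$, and to produce the $\text{PGL}_n$-action from the observation that the centre of $\Gln$ acts trivially by conjugation.

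First I would recall, from the functor-of-points description in this section, that $Z$ carries a universal $\struct{Z}\otimes\Lambda$-module $\mathcal{V}$ whose underlying $\struct{Z}$-module is the free module $\struct{Z}^n$, with $x_i$ acting through the $i$-th tautological $n\times n$ matrix. I then set
$$
\mathcal{A} = \shom_{\struct{Z}}(\mathcal{V},\mathcal{V}).
$$
Because $\mathcal{V}$ is free of rank $n$, the sheaf $\mathcal{A}$ is a sheaf of $\struct{Z}$-algebras isomorphic to $M_n(\struct{Z})$, so it is an Azumaya algebra of degree $n$, and it is split; this part is formal.

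Next I would install the group action. The group $\Gln$ acts on $Z$ by simultaneous conjugation, $g\cdot(x_1,\dots,x_g) = (gx_1g^{-1},\dots,gx_gg^{-1})$, which is the $\ad$-action already in play. I lift this to $\mathcal{V}$ by letting $g\in\Gln$ act on the fibre over $z$ by $v\mapsto gv$, covering the translation $z\mapsto g\cdot z$ of the base. The identity $g(x_iv) = (gx_ig^{-1})(gv)$ shows this lift respects the $\Lambda$-module structure, so $\mathcal{V}$ becomes a $\Gln$-equivariant sheaf on $Z$; applying $\shom_{\struct{Z}}(-,-)$ transports this to a $\Gln$-equivariant structure on $\mathcal{A}$ under which $g$ sends a local section $\phi$ to $g\circ\phi\circ g^{-1}$ over the translated point.

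The only substantive point — and the reason the statement is worth isolating — is the descent to $\text{PGL}_n = \Gln/\gm$. The central torus $\gm\subset\Gln$ acts trivially on $Z$, since conjugation by a scalar is the identity, and it acts on $\mathcal{A}$ by conjugation by scalars, which is again the identity, even though $\gm$ acts on $\mathcal{V}$ itself nontrivially by scalar multiplication. Hence the $\Gln$-equivariant structure on the pair $(Z,\mathcal{A})$ has trivial central action, so it descends to a $\text{PGL}_n$-equivariant structure on $\mathcal{A}$ covering the conjugation action of $\text{PGL}_n$ on $Z$; this is the asserted split Azumaya algebra with $\text{PGL}_n$-action. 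I would end by remarking that nothing here forces this $\text{PGL}_n$-linearised algebra to be equivariantly split, and it is exactly this possible failure, Nori's obstruction, that the rest of the argument is designed to detect.
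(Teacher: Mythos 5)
Your proof is correct and takes essentially the same approach as the paper: both realise the algebra as $\End(R^n)$ (the sheaf of endomorphisms of the tautological rank-$n$ module), observe that simultaneous conjugation gives a $\Gln$-action covering the action on $Z(\lie(\Gln))$, and descend to $\PSotn[\text{---}]$ sorry, to $\text{PGL}_n$ because the central scalars act trivially both on $Z$ and on the endomorphism algebra by conjugation. You spell out the equivariant-sheaf bookkeeping more explicitly than the paper's functor-of-points phrasing, and your closing remark that $\mathcal{V}$ itself does \emph{not} descend $\text{PGL}_n$-equivariantly (so the equivariant Azumaya algebra need not be equivariantly split) is exactly the right way to foreshadow Nori's obstruction.
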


\begin{proof}
 First let us write down the action of $\text{PGL}_n$ on the functor of points of
$Z(\lie(\Gln))$.
 Recall that an $R$-point of $Z(\lie(\Gln))$ is the same thing as a $R\otimes\Lambda$-module
structure on $R^n$. An $R$-point of $\Gln$ acts on $R^n$ giving a different module 
structure and hence a different map to $Z(\lie(\Gln))$. The centre acts trivially hence we
really have a $\text{PGL}_n$ action. This action lifts in a natural way to the Azumaya algebra 
$\text{End}_R(R^n)$.
\end{proof}

\begin{corollary}
 Let $U\subseteq Z(\lie(G))$ be a $G^{ad}$-stable open
subset on which $G^{ad}$ acts freely. Then there exists
an Azumaya algebra on $U/G$.
\end{corollary}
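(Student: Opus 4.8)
The plan is to produce the desired algebra by descending, along the quotient map $\pi\colon U\to U/G$, the split $\text{PGL}_n$-equivariant Azumaya algebra $\A$ of the preceding proposition. First I would dispose of the bookkeeping. Since $G$ sits in $\Gln$ by its standard representation and the centre $Z(G)$ maps into the scalar matrices (for $G=\Gln$ this is trivial, while for $\Sptn$ and $\Sotn$ the centre is contained in $\{\pm 1\}$ times the identity), the composite $G\to\Gln\to\text{PGL}_n$ factors through $G^{ad}$; for the same reason the adjoint action of $G$ on $Z(\lie(G))$ factors through $G^{ad}$, so $U/G=U/G^{ad}$ and the fibres of $\pi$ are exactly the $G^{ad}$-orbits. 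Through $G^{ad}\to\text{PGL}_n$ the $\text{PGL}_n$-action on $Z(\lie(\Gln))$ and on $\A$ becomes $G^{ad}$-equivariant, and the closed immersion $Z(\lie(G))\hookrightarrow Z(\lie(\Gln))$ of Section~\ref{s:azu} is $G^{ad}$-equivariant (it is the map forgetting the condition $r(x_i)=-x_i$). Restricting $\A$ first to $Z(\lie(G))$ and then to $U$ thus gives a $G^{ad}$-equivariant, still split, Azumaya algebra $\A|_U$ on $U$.

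Next I would argue that $\pi\colon U\to U/G$ is a principal $G^{ad}$-bundle, i.e. an fppf- (indeed \'etale-) locally trivial $G^{ad}$-torsor. We are in characteristic zero, $G^{ad}$ is reductive, and the action on $U$ is free by hypothesis, so Luna's \'etale slice theorem produces \'etale-local trivialisations of $\pi$ and in particular the geometric quotient $U/G$ (when $U$ lies in the stable locus this is exactly the proposition above on $Z(\lie(G))^s$). Consequently $\pi$ is faithfully flat, and the standard identification $U\times_{U/G}U\cong U\times G^{ad}$ shows that faithfully flat descent data for $\pi$ are the same as $G^{ad}$-equivariant structures.

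Finally, applying descent along $\pi$ to the $G^{ad}$-equivariant algebra $\A|_U$ yields a sheaf of $\struct{U/G}$-algebras $\Ba$ with $\pi^{*}\Ba\cong\A|_U$. The conditions defining an Azumaya algebra — coherence, local freeness of finite rank, and bijectivity of $\Ba\otimes_{\struct{U/G}}\Ba^{\mathrm{op}}\to\End_{\struct{U/G}}(\Ba)$ — are all fppf-local on the base and hold after the faithfully flat pullback $\pi^{*}$ because $\A|_U$ is split Azumaya, so $\Ba$ is an Azumaya algebra on $U/G$, as claimed; its class in $\Br(U/G)$ is Nori's obstruction. The only non-formal point, and the sole place where characteristic zero is genuinely needed, is the step identifying $\pi$ with an \'etale-locally trivial principal $G^{ad}$-bundle via Luna's slice theorem; granting that, everything else is routine equivariant descent.
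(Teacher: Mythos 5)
Your argument is correct and is the same construction the paper has in mind; the paper's proof is just the one-liner ``restrict the algebra to $U$,'' leaving implicit exactly the descent along the $G^{ad}$-torsor $U\to U/G$ that you spell out. Filling in the factoring of $G\to\mathrm{PGL}_n$ through $G^{ad}$, the $G^{ad}$-equivariance of the inclusion $Z(\lie(G))\hookrightarrow Z(\lie(\Gln))$, and the fppf-locality of the Azumaya conditions is a faithful expansion of what the authors compressed into one sentence.
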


\begin{proof}
This follows by restricting the algebra in the proposition to $U$.
\end{proof}

We will denote the Azumaya algebra constructed above
by the symbol $\A(\rho, U)$. We will often abuse
notation and just write $\A$.

We denote the completion of the local ring at $0$ of 
$Z(\lie(G))/\!\!/G$ by $\widehat{\struct{Z/\!\!/G,0}}$ and its function
field by $\hat{k}_{Z/\!\!/G,0}$.  We define $\struct{Z/\!\!/G,0}^s$ by the 
Cartesian square
$$
\xymatrix{
 \spec(\struct{Z/\!\!/G,0}^s) \ar[r] \ar[d] & Z^s/G \ar[d] \\
 \spec(\struct{Z/\!\!/G,0}) \ar[r] & Z/\!\!/G.
}
$$
The ring $\widehat{\struct{Z/\!\!/G,0}^s}$ is defined by a similar Cartesian square. 

The class of $\A$ in $\Br(\widehat{\struct{Z/\!\!/G,0}^s})$ is called \emph{Nori's obstruction.}


The key result is:

\begin{theorem}
\label{t:obstruction}
 Denote by $\moduli{G}^0$ the connected component of the moduli
space containing the trivial bundle. If a universal bundle exists
on some open subset of $\moduli{G}^0$ then the Brauer class of 
$\A$ in $\text{Br}(\hat{k}_{Z/\!\!/G,0})$ vanishes.
\end{theorem}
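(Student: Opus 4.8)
The plan is to identify $\A$, after completing at the trivial bundle, with the obstruction to the existence of a universal bundle for the moduli \emph{space} $\moduli{G}^0$, and then to observe that a genuine universal bundle forces this obstruction to split. Throughout write $\hat{k}:=\hat{k}_{Z/\!\!/G,0}$.

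First I would pass to the generic point. By Proposition~\ref{p:normal} and its corollary, $\widehat{\struct{\moduli{G}^0,0}}\cong\widehat{\struct{Z/\!\!/G,0}}$; since $Z(\lie(G))/\!\!/G$ is a good quotient of the smooth affine variety $Z(\lie(G))$ it is normal, and being excellent its completed local rings are domains, so $\widehat{\struct{Z/\!\!/G,0}}$ is a domain and the completion homomorphism embeds $k(\moduli{G}^0)$ into $\hat{k}$ in such a way that $\spec\hat{k}$ maps to the generic point $\eta$ of the irreducible variety $\moduli{G}^0$. If a universal bundle $\mathcal{Q}$ exists on $U\times X$ for some nonempty open $U\subseteq\moduli{G}^0$, then $U$ contains $\eta$, and it will suffice to produce a vector bundle over $\spec\hat{k}$ whose endomorphism algebra represents the image of $\A$ there.

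The geometric heart is Luna's \'etale slice theorem at $0$, which is responsible for the hypothesis $\mathrm{char}\,k=0$. On the parameter space there is the tautological $G$-bundle on $\N\times X$, equivariant for $\mathrm{GL}(W)$ and restricting on the fibre over $v$ to the bundle classified by the image of $v$; over the locus $\N^s$ where that bundle is stable its automorphism group is the centre $Z(G)$, as for stable bundles generally (see Proposition~\ref{p:stableiso}), so $\N^s\to\moduli{G}^{0,s}$ is a torsor under $\mathrm{GL}(W)/Z(G)$, where $\moduli{G}^{0,s}$ is the stable locus, and the tautological bundle descends to a universal bundle precisely when a certain $Z(G)$-gerbe over $\moduli{G}^{0,s}$ is trivial. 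Applying $\rho$, restricting to a fixed $x_0\in X$, and passing to the projective bundle — which now descends since $Z(G)$ acts by scalars — one obtains an Azumaya algebra $\A^{\mathrm{univ}}$ on $\moduli{G}^{0,s}$ whose class is the image of this gerbe in the Brauer group. Luna's theorem fed with Proposition~\ref{p:normal} identifies, formally near $0$, the $\mathrm{GL}(W)$-variety $\N$ with $\mathrm{GL}(W)\times^G Z(\lie(G))$, its stable locus with $\mathrm{GL}(W)\times^G Z(\lie(G))^s$, and the torsor $\N^s\to\moduli{G}^{0,s}$, restricted along the slice $Z(\lie(G))^s\hookrightarrow\N^s$, with the $G^{ad}$-torsor $Z(\lie(G))^s\to Z(\lie(G))^s/G$; unwinding the construction of Section~\ref{ss:nori} and the description of points of $Z(\lie(\Gln))$ in Section~\ref{s:azu} then shows that under this identification $\A^{\mathrm{univ}}$ corresponds to $\A$ along the slice. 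Hence, after base change to $\hat{k}$, the class of $\A$ equals that of $\A^{\mathrm{univ}}$ restricted to $\eta$ and then to $\hat{k}$.

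It then remains to use the hypothesis: over $U$ the vector bundle $\rho_*(\mathcal{Q})|_{U\times\{x_0\}}$ has endomorphism algebra $\A^{\mathrm{univ}}|_U$, which is therefore split; restricting to $\eta$ and base changing to $\hat{k}$, and invoking the identification of $\A$ with $\A^{\mathrm{univ}}$ from the previous paragraph, we conclude that the class of $\A$ in $\Br(\hat{k}_{Z/\!\!/G,0})$ vanishes. The main obstacle is the identification $\A^{\mathrm{univ}}\leftrightarrow\A$ above: one must upgrade the slice theorem so that it transports not merely the completed local rings but the relevant Brauer classes, i.e. show that the projective bundle underlying $\A$ on $Z(\lie(G))^s/G$ corresponds, under the slice identification near $0$, to the projectivisation at $x_0$ of the universal bundle over $\moduli{G}^{0,s}$; concretely this means matching the $H^1(X,\ad)$-deformation data underlying Proposition~\ref{p:normal} with the $\Lambda$-module structures of Section~\ref{s:azu}, the residual $Z(G)$-torsor ambiguity in the comparison being harmless once $\rho$ has been applied, since line bundles on a point are trivial. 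One also uses that stable bundles are dense in $\moduli{G}^0$, so that $\moduli{G}^{0,s}$ and $Z(\lie(G))^s$ are nonempty and the generic points and formal neighbourhoods used above are where intended.
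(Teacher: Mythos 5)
Your proposal takes a genuinely different route from the paper's. The paper's argument is short: (i) by \cite[Proposition~5.1]{schmitt:02} a universal bundle furnishes a rational section $\sigma:\moduli{G}^0\dashrightarrow\N$ of the parameter map; (ii) composing with the Luna slice map $\phi:V\to T_0(\N)$ from \cite[Lemma~5.1]{drezet:04} and the projection $T_0(\N)\to{\rm N}_{\orb(0)}=Z(\lie(G))$, and using $\widehat{\struct{M,0}}\cong\widehat{\struct{Z/\!\!/G,0}}$, one obtains a section of the $G^{\ad}$-torsor $Z^s\to Z^s/G^{\ad}$ over $\hat{k}_{Z/\!\!/G,0}$; (iii) since $\A$ is \emph{by construction} the descent along $Z^s\to Z^s/G^{\ad}$ of the split algebra $\End_R(R^n)$, a section of the torsor splits $\A$ immediately. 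The slice theorem is invoked only at the level of completed local rings, and no comparison of Brauer classes across the slice is ever needed. Your route instead constructs an auxiliary Azumaya algebra $\A^{\mathrm{univ}}$ on $\moduli{G}^{0,s}$ from the tautological family, splits it by $\End(\rho_*(\mathcal Q)|_{U\times\{x_0\}})$, and then proposes to identify $\A^{\mathrm{univ}}$ with $\A$ via Luna. Conceptually both strategies trace back to the same $G^{\ad}$-torsor, but the paper's formulation extracts exactly what it needs (a rational section) and nothing more, while yours aims for the stronger statement that the ``universal obstruction class'' on the moduli space literally equals the class of $\A$.

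The step you flag as ``the main obstacle'' — transporting the Brauer class, rather than merely the quotient scheme, across Luna's theorem — is a genuine gap. You would have to show that the slice \'etale map carries the projectivised tautological family at $x_0$ to the projective bundle underlying $\A$ on $Z^s/G^{\ad}$, which amounts to carefully matching the $H^1(X,\ad)$-deformation data of Proposition~\ref{p:normal} with the $\Lambda$-module description of $Z(\lie(G))$; this computation is not carried out in the proposal, and it is not a formality. The paper sidesteps it entirely: a rational section of $\N\to\moduli{G}^0$ becomes, after the slice, a rational section of $Z^s\to Z^s/G^{\ad}$, and a section of the torsor on which $\A$ descends from a matrix algebra is precisely a splitting of $\A$ over the base field of the section. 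I would replace the construction and comparison of $\A^{\mathrm{univ}}$ with the rational section $\sigma$ from \cite[Proposition~5.1]{schmitt:02} and conclude via the torsor section, exactly as the paper does.
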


\begin{proof}
Using \cite[Lemma 5.1]{drezet:04} and \ref{p:normal}, there is
an open neighbourhood $V$ of the trivial bundle
in $\N$ and a $G$ invariant morphism
$$
\phi : V\rightarrow {\rm T}_0(\N),
$$
where $0$ is the trivial bundle. By \cite[Proposition 5.1]{schmitt:02}, a universal bundle
produces a rational section
$$
\sigma:\moduli{G}^0\dashrightarrow \N.
$$
Composing $\phi\circ\sigma$ with the projection to the normal space to the orbit
$$
{\rm T}_0(\N) \rightarrow {\rm N}_{\text{orb(0)}}
$$
and recalling that the slice theorem tells us that
$$
\widehat{\struct{M,0}} \cong \widehat{\struct{Z/\!\!/G,0}}
$$
we see that the map
$$
Z^s\rightarrow Z^s/G^{\ad}
$$
has a section over $\hat{k}_{Z/\!\!/G,0}$. Hence our algebra is split.
\end{proof}

\section{Some Remarks on Generic Quaternion Algebras}

We will recall here some facts on the generic quaternion
algebra over the field $k(x,y)$ that will be needed below.
A detailed exposition can be found in \cite[Sections 10 and 11]{draxl:83}.
The generic quaternion algebra, denoted $\Ba_{k(x,y)}$,
is the algebra generated by the symbols $a$ and $b$ subject to the
relations $a^2=x$, $b^2=y$ and $ab=-ba$. It is split by the 
quadratic extension $k(\rx,y)$. 

\begin{proposition}
\label{p:quaternion}
\begin{enumerate}
 \item 
 One can realise $\Ba_{k(x,y)}$ as the subalgebra of 
$M_2(k(\rx,y))$ generated by the matrices
$$
\left(\begin{array}{cc}
      \rx & 0 \\
       0 & -\rx
     \end{array}\right)\quad \text{and}\quad
\left(\begin{array}{cc}
      0 & 1 \\
       y & 0
     \end{array}\right)
$$

\item Let $\sigma$ be the generator of 
${\rm Gal}(k(\rx,y)/k(x,y))$. Extend the action of ${\rm Gal}(k(\rx,y)/k(x,y))$
to $M_2(k(\rx,y))$ by defining
$$
\left(\begin{array}{cc}
      \alpha & \beta \\
       \gamma &  \delta
     \end{array}\right)^\sigma =
\left(\begin{array}{cc}
      \delta^\sigma & \frac{\gamma^\sigma}{y} \\
       \beta^\sigma y &  \alpha^\sigma
     \end{array}\right).
$$
Then $\Ba_{k(x,y)}$ is the subalgebra fixed by ${\rm Gal}(k(\rx,y)/k(x,y))$.
\end{enumerate}
\end{proposition}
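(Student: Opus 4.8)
The plan is to establish both parts by direct, essentially computational, verification, taking part (1) first and then using it to deduce part (2).

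For part (1), let me write $A$ and $B$ for the two proposed matrices over $k(\rx,y)$, namely $A = \left(\begin{smallmatrix} \rx & 0 \\ 0 & -\rx\end{smallmatrix}\right)$ and $B = \left(\begin{smallmatrix} 0 & 1 \\ y & 0\end{smallmatrix}\right)$. First I would check the defining relations: compute $A^2 = xI$, $B^2 = yI$, and $AB = \left(\begin{smallmatrix} 0 & \rx \\ -\rx y & 0 \end{smallmatrix}\right) = -BA$. This gives a $k(x,y)$-algebra homomorphism $\Ba_{k(x,y)} \to M_2(k(\rx,y))$ sending $a \mapsto A$, $b \mapsto B$. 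Since $\Ba_{k(x,y)}$ is a quaternion algebra it is a central simple $k(x,y)$-algebra, so this homomorphism is injective; its image is a $4$-dimensional $k(x,y)$-subalgebra of $M_2(k(\rx,y))$ (which has dimension $4$ over $k(\rx,y)$, hence dimension $8$ over $k(x,y)$). To see the image is exactly the subalgebra generated by $A$ and $B$, note that subalgebra is the $k(x,y)$-span of $I, A, B, AB$, and one checks these four matrices are linearly independent over $k(x,y)$ inside $M_2(k(\rx,y))$ — for instance by inspecting entries, since $I,A$ are diagonal with $\{I,A\}$ independent over $k(x,y)$ because $\rx\notin k(x,y)$, while $B, AB$ are off-diagonal and similarly independent. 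Thus the image is precisely that $4$-dimensional subalgebra, proving (1).

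For part (2), I would first check that the prescribed formula $M \mapsto M^\sigma$ genuinely defines a ring automorphism of $M_2(k(\rx,y))$ that is semilinear over $\sigma$ and whose square is the identity. Semilinearity over $\sigma$ (i.e.\ $(cM)^\sigma = c^\sigma M^\sigma$ for $c \in k(\rx,y)$, and additivity) is immediate from the entrywise description, and $\sigma^2 = \mathrm{id}$ on $k(\rx,y)$ combined with the shape of the formula gives $(M^\sigma)^\sigma = M$ after a short entry computation. The multiplicativity $(MN)^\sigma = M^\sigma N^\sigma$ is the one genuine calculation: one way to organize it is to observe that the map can be written as $M \mapsto C^{-1} \bar{M} C$ for a suitable matrix $C \in M_2(k(\rx,y))$ — inspection of the formula suggests $C = \left(\begin{smallmatrix} 0 & 1 \\ y & 0 \end{smallmatrix}\right) = B$ works, since conjugation by $B$ swaps the diagonal entries and rescales the off-diagonal entries by $y^{\pm 1}$, matching the formula once combined with applying $\sigma$ entrywise — and then multiplicativity is automatic because $M \mapsto \bar{M}$ (entrywise $\sigma$) and conjugation by $B$ are each ring homomorphisms. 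Having set this up, I then verify $A^\sigma = A$ and $B^\sigma = B$ directly from the formula (using $\rx^\sigma = -\rx$, $y^\sigma = y$): for $A$, the formula swaps $\rx$ and $-\rx$ on the diagonal and then $\sigma$ swaps them back; for $B$, the off-diagonal entries $1$ and $y$ go to $\tfrac{y}{y}=1$ and $1\cdot y = y$. Since $A, B$ are fixed and the fixed set of a ring automorphism is a subring, the whole subalgebra generated by $A$ and $B$ — which by part (1) is $\Ba_{k(x,y)}$ — lies in the fixed subalgebra. Finally, a dimension count closes the argument: the fixed subalgebra of a $\sigma$-semilinear involution on the $k(\rx,y)$-vector space $M_2(k(\rx,y))$ of dimension $4$ is a $k(x,y)$-form of it, hence has $k(x,y)$-dimension $4$; since $\Ba_{k(x,y)}$ already has $k(x,y)$-dimension $4$ and is contained in it, the two coincide.

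The main obstacle I anticipate is purely the bookkeeping in verifying that $M \mapsto M^\sigma$ is a ring homomorphism and an involution; expressing it as $\bar{M} \mapsto B^{-1}\bar{M}B$ (equivalently $M \mapsto B^{-1}\bar M B$ since $B$ has entries fixed by $\sigma$) is the cleanest route and makes both the homomorphism property and $\sigma$-semilinearity transparent, reducing the ``hard part'' to noticing that the matrix $B$ from part (1) is exactly the conjugating element. Everything else is a finite check of relations and a dimension count.
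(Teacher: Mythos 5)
Your argument is correct and complete. The paper's own ``proof'' of this proposition is only a citation: part (1) is referred to Draxl, Section~10, and part (2) to the calculations on pages~77--78 of Draxl, to be finished by ``some linear algebra.'' What you supply is exactly that linear algebra, made self-contained. For part (1) the check of the relations $A^2=xI$, $B^2=yI$, $AB=-BA$, the injectivity of the resulting map from the central simple algebra $\Ba_{k(x,y)}$, and the $k(x,y)$-linear independence of $I,A,B,AB$ together identify the image with the $k(x,y)$-subalgebra generated by the two matrices, which is the correct reading of the statement (the $k(\rx,y)$-subalgebra they generate is all of $M_2(k(\rx,y))$, so the proposition must mean the $k(x,y)$-subalgebra). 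For part (2), your observation that the prescribed map is $M\mapsto B^{-1}\bar{M}B$, i.e.\ entrywise $\sigma$ followed by conjugation by the fixed matrix $B$, is the right organizing idea: it makes semilinearity, multiplicativity, and involutivity immediate rather than a brute-force entry computation. Combined with $A^\sigma=A$, $B^\sigma=B$, and the Galois-descent dimension count ($\dim_{k(x,y)}$ of the fixed subalgebra is $4$), the containment of $\Ba_{k(x,y)}$ in the fixed subalgebra upgrades to equality. Nothing is missing; the one virtue of the paper's version is brevity, and the virtue of yours is that the reader need not consult Draxl.
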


\begin{proof}
 The first part is carried out in \cite[section 10]{draxl:83} and the
second part follows from the calculations in \cite[pages 77--78]{draxl:83}
by some linear algebra.
\end{proof}

\section{The Case of $\Sptn$}

Consider the field $\hat{L}=k((x,y))$ and the generic quaternion algebra $\Ba\otimes\hat{L}=\hat{\Ba}$
over $\hat{L}$.  It is split by the
field extension $\hat{K}=k((\sqrt{x},y))$. We will construct
an $\hat{L}$-point of $\spec(\widehat{\struct{Z/\!\!/G,0}^s})$ that pulls back Nori's
obstruction to $\hat{\Ba}\otimes M_n(\hat{L})$. (Recall the definition of 
$\spec(\widehat{\struct{Z/\!\!/G,0}^s})$ from \S 4.2.)
 By \cite[IV Corollary 2.6]{milne:80} and \ref{t:obstruction}, 
this will show that no universal bundle exists on the moduli space for 
$G=\Sptn$.

\subsection{Construction of the $\hat{L}$-point} 

Let $R=k[\sqrt{x},y]$ and set 
$$
A=\left(\begin{array}{cc}
 \sqrt{x}D & 0 \\
 0 & -\sqrt{x}D
\end{array}
 \right)\quad
C=\left(\begin{array}{cc}
 0 & I_n \\
 yI_n & 0
\end{array}
 \right).
$$
where $D=\diag(1,2,\dots,n)$.
 We consider $R^{2n}$ with the symplectic form $B$
determined by the matrix
$$\left[\begin{array}{cc}0&-I_n\\I_n&0\end{array}\right]$$
and consider $A,C\in \End_R(R^{2n})$. 
There is an anti-involution $r$ of $M_{2n}(R)$ determined by taking 
adjoints that is 
$$
B(r(f)(v),w) = B(v,f(w)).
$$
One easily checks that 
$r(A)=-A$ and $r(C)=-C$. 

Let $V$ be a vector space of dimension $2n$ over $k$ with the standard symplectic form $B$. Recall that the symplectic group $\Sptn$ is defined as $\{g \in GL(V) | B(v,w)=B(gv,gw) \text{ for all } v,w \in V\}$. The centre of $\Sptn$ is $\{\pm I_{2n}\}$. Taking the quotient gives us the projective symplectic group $\PSptn$. We also have the group of symplectic similitudes $\GSptn$ defined as $\{g \in GL(V) | B(v,w)=\lambda B(gv,gw) \text{ for all } v,w \in V, \lambda \in k^*\}$. The scalar matrices form a subgroup of $\GSptn$ and the quotient is isomorphic to $\PSptn$. 

Consider the $R\otimes \Lambda$-module structure on $R^{2n}$ where
the action of $x_i$ is defined by
$$
x_i = \left\{ \begin{array}{cc}
        A & i=1 \\
 	yC & i\ge 2 .

      \end{array} \right. ,
$$
Note that $x_i\in \lie(\Sptn)$.
In view of the discussion at the start of section \ref{s:azu}, this yields
an $R$-point 
$$\phi : \spec(R) \rightarrow Z(\lie(\Sptn)).$$
There is an associated $K=k(\sqrt{x},y)$-point 
$$\phi_0:\spec(K)\rightarrow Z(\lie(\Sptn)).$$

There is an action of $\Z/2\Z$ on $R$ sending $\sqrt{x}$ to $-\sqrt{x}$ and fixing $y$. Viewing
$\text{PSp}_{2n}$ as a quotient of $\text{GSp}_{2n}$ we define an inclusion
$$
\Z/2\Z\hookrightarrow \text{PSp}_{2n}
$$
with image $C$. Let $R_y$ be the localisation of $R$ obtained 
by inverting $y$. We have an $R_y$ point of $Z(\lie(\Sptn))$ denoted by $\phi_y$which is  
obtained from $\phi$.
With this definition $\phi_y$ is equivariant for the
$\Z/2\Z$-action as
$$
CAC^{-1}=-A
$$
and $C$ acts trivially by conjugation on the other $x_i$ defined above.

 Setting $S=k[x,y]_y$, we obtain a map
$$
\bar{\phi_y}:\spec(S)\rightarrow Z(\lie(\Sptn))/\!\!/\Sptn
$$
by passing to quotients.

\begin{proposition}
 \begin{enumerate}
  \item The map $\bar{\phi_y}$ induces a morphism
 $$\bar{\phi}_0:\spec(k(x,y))\rightarrow Z(\lie(\Sptn))^s/\PSptn.$$
  \item Abusing notation slightly, we have $\bar{\phi_0}^*(\A) = \Ba_{k(x,y)}\otimes M_n(k(x,y))$.
 \end{enumerate}
\end{proposition}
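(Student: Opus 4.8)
The plan is to verify each of the two assertions by tracking what the explicit construction of $\phi$ does under the GIT quotient and under restriction of the Azumaya algebra $\A$. For part (1), the first task is to check that $\phi_0$ actually lands in the stable locus $Z(\lie(\Sptn))^s$. By Proposition~\ref{p:stableiso}, this means showing that the $K\otimes\Lambda$-module structure on $K^{2n}$ given by the matrices $A$ and $yC$ (hence by $A$ and $C$) has no nonzero proper submodule that is totally isotropic for $B$. I would argue as follows: a submodule must be invariant under both $A$ and $C$. The matrix $A=\diag(\sqrt{x}D,-\sqrt{x}D)$ has $2n$ distinct eigenvalues $\pm\sqrt{x},\pm2\sqrt{x},\dots,\pm n\sqrt{x}$ over $K$ (since $1,2,\dots,n$ are distinct and nonzero and $\sqrt{x}\neq 0$), so an $A$-invariant subspace is a coordinate subspace, i.e.\ spanned by a subset of the standard basis vectors $e_1,\dots,e_{2n}$. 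Requiring in addition invariance under $C$, which swaps the two blocks of size $n$ (up to the scalar $y$), forces the index set to be symmetric under $i\mapsto i+n$; but then the subspace contains a hyperbolic pair $e_i, e_{i+n}$ with $B(e_i,e_{i+n})\neq 0$, contradicting total isotropy unless the subspace is zero. This establishes stability, and since $C$ acts freely on the stable locus of $\PSptn$ (by the Proposition giving the $G^{ad}$-bundle $Z^s\to Z^s/G^{ad}$), passing to the quotient by $\PSptn$ is legitimate and $\bar\phi_0$ is well-defined.

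For part (2), the strategy is to compute the pullback of $\A$ directly from its construction. Recall $\A(\rho,U)$ is obtained by descending the $\PGL_{2n}$-equivariant split Azumaya algebra $\End_R(R^{2n})$ along the quotient $Z^s\to Z^s/\PSptn$; concretely, over the $S$-point $\bar\phi_y$, the pullback $\bar\phi_0^*(\A)$ is the descent of the trivial algebra $M_{2n}(K)$ along the $\Z/2\Z=\langle C\rangle$-torsor $\spec K\to\spec k(x,y)$, where $\Z/2\Z$ acts on $M_{2n}(K)$ by conjugation by $C$ twisted by the Galois action $\sqrt{x}\mapsto-\sqrt{x}$ on $K$. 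So I must identify this twisted form of $M_{2n}(k(x,y))$. Here is where I would invoke Proposition~\ref{p:quaternion}(2): the action of $\sigma\in\mathrm{Gal}(k(\rx,y)/k(x,y))$ described there on $M_2(k(\rx,y))$ — entrywise Galois action followed by conjugation by $\left(\begin{smallmatrix}0 & 1\\ y & 0\end{smallmatrix}\right)$ — is exactly the $2\times 2$ prototype of our action, since the lower-left block of our $C$ is $yI_n$ and the upper-right block is $I_n$. The full $2n\times 2n$ action is the "block" version of this: writing $M_{2n}(K)=M_2(K)\otimes M_n(K)$ in the natural way compatible with the block structure of $A$ and $C$, the conjugation by $C$ is (conjugation by $\left(\begin{smallmatrix}0 & I_n\\ yI_n & 0\end{smallmatrix}\right)$) $=$ (conjugation by $\left(\begin{smallmatrix}0 & 1\\ y & 0\end{smallmatrix}\right)$) $\otimes \mathrm{id}_{M_n}$. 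Hence the fixed algebra is $\Ba_{k(x,y)}\otimes M_n(k(x,y))$ by Proposition~\ref{p:quaternion}(2) applied to the first tensor factor, which is the claim.

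The main obstacle I anticipate is getting the descent bookkeeping exactly right in part (2): one has to be careful that the Galois twist on the base field $K/k(x,y)$ and the geometric $\Z/2\Z$-action via $C\in\PSptn$ combine into a single $\Z/2\Z$-action whose fixed points compute $\bar\phi_0^*\A$, and that this combined action on $M_{2n}(K)$ is precisely the one in Proposition~\ref{p:quaternion}(2) (tensored with $M_n$), with no stray sign or factor of $y$. The key point making this work is the deliberate choice $x_1=A$ and $x_i=yC$ for $i\ge 2$: the eigenvalue structure of $A$ forces stability and pins down invariant subspaces, while $C$ — whose off-diagonal blocks are $I_n$ and $yI_n$ — is manufactured precisely to reproduce the matrix $\left(\begin{smallmatrix}0 & 1\\ y & 0\end{smallmatrix}\right)$ from the generic quaternion algebra. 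Once the identification of the $\Z/2\Z$-action is nailed down, both parts follow from the cited propositions together with faithfully flat descent of Azumaya algebras along the étale $\Z/2\Z$-torsor $\spec R_y\to\spec S$.
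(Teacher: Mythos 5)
Your proof is correct and takes essentially the same approach as the paper: for (1) you exploit that $A$ has distinct eigenvalues and that $C$-invariance forces a hyperbolic pair $e_i,e_{i+n}$ into any invariant subspace (the paper phrases this as finding an $A$-eigenvector $v$ with $B(v,C(v))\neq 0$, which is the same observation), and for (2) you identify the $\Z/2\Z$-twisted descent of $M_{2n}(K)$ via the block decomposition $M_2(K)\otimes M_n(K)$ and invoke Proposition~\ref{p:quaternion}(2), exactly as the paper does.
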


\begin{proof}

\noindent (1) Set $K=k(\sqrt{x},y)$. We need to show that the $K\otimes \Lambda$-module structure defined above
on $K^{2n}$ has no totally isotropic submodules.
Suppose that there exists an totally isotropic submodule $V$. Then $V$ must contain an
 $A$ eigenvector
$v$ as the endomorphism $A$ is semisimple with distinct eigenvalues. But then one
can show that $B(v,C(v))\ne 0$, so that the $K\otimes \Lambda$
submodule generated by
$v$  is not totally isotropic.
 
\noindent (2)  In \ref{p:quaternion} part 2, the generic quaternion algebra was constructed
as a geometric quotient of the matrix algebra $M_2(K)$ with a prescribed $\Z/2\Z$-action.

There is a $\Z/2\Z$-action on $M_2(K)\otimes M_n(K)$ where $\Z/2\Z$ acts on the second factor
via its Galois action on $K$.
Note that 
$$
C\left(
\begin{array}{cc}
 X & Y \\
 W & Z
\end{array}\right) C^{-1}
=\left(
\begin{array}{cc}
 Z & y^{-1}W \\
 yY & X
\end{array}\right)
$$
 The discussion above shows that this is the 
pullback of an action of $\Z/2\Z$ on $Z(\lie(\Sptn))$ via $\phi_0$. The result follows via taking quotients.  
\end{proof}

\begin{corollary}
 There exists a map
$$
\hat{\phi}:\spec(k((x,y)))\rightarrow \spec(\widehat{\struct{Z/\!\!/G,0}^s}).
$$
such that $\hat{\phi}^*\A = \Ba_{k(x,y)}\otimes M_n(k(x,y))$.
\end{corollary}

\begin{proof}
 The map $\phi$ sends $0$ to $0$ hence there is a map on completions of local
rings. Therefore we obtain a map 
$$
\phi_c:\spec(k((\sqrt{x},y)))\rightarrow \widehat{\struct{Z,0}}.
$$
In view of the above proposition
it suffices to show that it descends to a morphism 
$$
\spec(k((x,y)))\rightarrow \widehat{\struct{Z/\!\!/G,0}}.
$$
The fact that $\phi$ sends 0 to 0 gives a commutative diagram
of local rings and local homomorphisms
$$
\xymatrix{
 k((\sqrt{x},y)) & \struct{Z,0} \ar[l] \\
 k((x,y)) \ar[u] & \struct{Z/\!\!/G,0} \ar[u] \ar[l].
}
$$
with $\phi_c$ induced by completion from the top line.
The image of the maximal ideal of $\struct{Z/\!\!/G,0}$ must be contained in the
$\struct{Z/\!\!/G,0}$ submodule of $k((x,y))$ generated by $(x,y)$. But the field
$k((x,y))$ is complete with respect to the induced topology and hence
we obtain our map.
\end{proof}

\begin{corollary}
 There is no universal bundle over the generic point
of $\moduli{\Sptn}$.
\end{corollary}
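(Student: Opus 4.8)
The plan is to argue by contradiction, combining Theorem~\ref{t:obstruction}, the corollary just established, and the non-triviality of the generic quaternion algebra over a complete local field. Suppose a universal bundle exists over the generic point of $\moduli{\Sptn}$, that is, over some non-empty open subset of $\moduli{\Sptn}^0$. Then Theorem~\ref{t:obstruction}, applied with $G=\Sptn$, tells us that the Brauer class of $\A$ in $\Br(\hat k_{Z/\!\!/G,0})$ is trivial.

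The first step is to transport this vanishing from the function field $\hat k_{Z/\!\!/G,0}$ into the ring $\widehat{\struct{Z/\!\!/G,0}^s}$, since it is the latter, and not a field, that the point $\hat\phi$ maps into (note $\hat\phi$ is not dominant, because $\dim Z/\!\!/G > 2$ for the relevant genera, so one cannot simply base change). The key point is that $\widehat{\struct{Z/\!\!/G,0}^s}$ is a regular integral ring with fraction field $\hat k_{Z/\!\!/G,0}$: by the earlier proposition the stable locus carries the $G^{\ad}$-principal bundle $Z^s\to Z^s/G^{\ad}$ with $Z^s$ open in the affine space $Z(\lie(\Sptn))$, so $Z^s/G=Z^s/G^{\ad}$ is smooth and is an open subscheme of the normal variety $Z/\!\!/G$; hence the completed local ring $\widehat{\struct{Z/\!\!/G,0}}$, restricted to this smooth open locus via the Cartesian square defining $\widehat{\struct{Z/\!\!/G,0}^s}$, is regular. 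Now \cite[IV Corollary 2.6]{milne:80} gives that the restriction $\Br(\widehat{\struct{Z/\!\!/G,0}^s})\to\Br(\hat k_{Z/\!\!/G,0})$ is injective, so $[\A]=0$ in $\Br(\widehat{\struct{Z/\!\!/G,0}^s})$ as well.

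Pulling back along $\hat\phi:\spec(k((x,y)))\to\spec(\widehat{\struct{Z/\!\!/G,0}^s})$ from the preceding corollary, functoriality of the Brauer group gives $[\hat\phi^*\A]=0$ in $\Br(k((x,y)))$. But that corollary computes $\hat\phi^*\A=\Ba_{k(x,y)}\otimes M_n(k(x,y))$, which is Brauer-equivalent to $\hat\Ba=\Ba_{k(x,y)}\otimes_{k(x,y)}k((x,y))$; so $\hat\Ba$ would be split over $k((x,y))$.

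It remains to observe that $\hat\Ba$ is in fact a division algebra over $k((x,y))$, which is the standard fact that the symbol algebra $(x,y)$ stays non-split over the complete field $k((x,y))$. Concretely, $\hat\Ba$ is split only if $y$ is a norm from $k((\rx,y))=k((x,y))(\rx)$; embedding $k((x,y))\hookrightarrow k((x))((y))$ and using the $y$-adic valuation, one checks that every value $a^2-xb^2$ of the norm form has even $y$-adic valuation — the reduction $\bar a^2-x\bar b^2$ of the leading coefficient is non-zero because $x$ is not a square in the residue field $k((x))$ — whereas $v_y(y)=1$. This contradicts the splitting obtained above, so no universal bundle exists over the generic point of $\moduli{\Sptn}$. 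The main obstacle, as I see it, is the first step: moving Nori's obstruction from the function field $\hat k_{Z/\!\!/G,0}$, where Theorem~\ref{t:obstruction} delivers it, into the ring $\widehat{\struct{Z/\!\!/G,0}^s}$ where $\hat\phi$ lives, which rests on the smoothness of the stable quotient and hence ultimately on Luna's slice theorem having identified a neighbourhood of the trivial bundle with $Z/\!\!/G$; the remaining steps are Brauer-group functoriality and a routine ramification computation.
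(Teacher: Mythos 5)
Your proof is correct and follows essentially the same route as the paper: regularity of $\widehat{\struct{Z/\!\!/G,0}^s}$ (via the principal bundle $Z^s\to Z^s/\PSptn$), injectivity of $\Br$ of a regular integral scheme into $\Br$ of its function field from \cite[IV Corollary 2.6]{milne:80}, Brauer functoriality along $\hat\phi$, non-splitting of $\hat\Ba$ over $k((x,y))$, and Theorem~\ref{t:obstruction}; you present this as a contradiction while the paper argues contrapositively, but the steps and ingredients are the same. The only variation is that you prove the non-splitting of $\hat\Ba$ directly by a $y$-adic valuation argument (every value of the norm form $a^2-xb^2$ has even $y$-adic valuation in $k((x))((y))$ since $x$ is not a square in $k((x))$), whereas the paper cites \cite[Corollary 4, page 82]{draxl:83}; your computation is correct and makes that step self-contained, but it is unpacking a citation rather than a different method.
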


\begin{proof}
 The algebra $\B$ is not split by \cite[Corollary 4, page 82]{draxl:83}.
It follows the algebra $\A$ is not split over $\widehat{\struct{Z/\!\!/G,0}^s}$. The
ring $\widehat{\struct{Z/\!\!/G,0}^s}$ is regular as the map
$$
Z^s\rightarrow Z^s/\PSptn
$$
is a principal bundle and hence $Z^s/\PSptn$ is smooth. By \cite[IV Corollary 2.6]{milne:80}, the algebra does not
split over the generic point of $\widehat{\struct{Z/\!\!/G,0}^s}$. The required result follows
from \ref{t:obstruction}.
\end{proof}

\section{The Case of $\Sotn$}

Set $R=k[\rx,y]$. We consider on $R^{2n}$ the symmetric form given 
by the matrix
$$
\left(\begin{array}{cc}
       0 & I_n \\
	I_n & 0
      \end{array}
\right).
$$
We set $\Sotn$ to be the orthogonal group preserving the associated symmetric form $B$.
 We define an $R\otimes \Lambda$-module structure on $R^{2n}$ where the 
action is given by the following matrices :
$$
x_1 = 
\left(\begin{array}{cc}
       \rx D &0 \\
	 0& -\rx D
      \end{array}
\right)
$$
and for $i>1$
$$
x_i = 
\left(\begin{array}{cc}
       \rx A &  yZ\\
	y^2 Z &  -\rx A
      \end{array}
\right).
$$
In the above
$D=\diag(1,2,\dots,n)$, $A=(a_{ij})$, and  $Z=(z_{ij})$ are given by 
$$
a_{ij} = \left\{ 
	\begin{array}{cc}
	 1 & \text{if } i=1 \text{ or } j= 1\\
	 0 & \text{otherwise} 
	\end{array}
\right. \quad
z_{ij} = \left\{ 
	\begin{array}{cc}
	 -1 & \text{if } j=1,i\ne 1\\
	 1 & \text{if } i=1, j\ne 1 \\
	 0 & \text{otherwise}.
	\end{array}
\right.
$$
Note that $x_i\in \lie(\Sotn)$.
This gives an $R$-point $\phi:\spec(R)\rightarrow Z(\lie(\Sotn))$
as described at the start of section \ref{s:azu}. The definition seems a little
arbitrary but it is this way to ensure that it is easy to 
show that our point hits the stable locus of $Z(\lie(\Sotn))$.
 
Denote by $\PSotn$ the group $\Sotn/ Z(\Sotn)$, that is the adjoint form of $\Sotn$.
 We denote by $\mathrm{GSO}_{2n}$ the group of linear transformations that preserve the standard bilinear form up to a scalar. Finally we define $\mathrm{PGSO}_{2n}$ to be
$$
\mathrm{GSO}_{2n} / Z(\mathrm{GSO}_{2n}).
$$

There is a natural identification $\PSotn \cong \mathrm{PGSO}_{2n}$ coming from the
fact that scalar matrices are central.

This identification  gives a map
$$
\psi:\Z/2\Z \rightarrow \PSotn
$$
with image 
$$
\left(\begin{array}{cc}
       0 &  I_n\\
	yI_n &  0
      \end{array}
\right) \stackrel{\text{defn}}{=} C
$$
Its scheme theoretic image is a closed subgroup $H<\PSotn$
defined over $k(y)$.

Denote by $\sigma$ the automorphism of $R$ fixing $y$ and sending 
$\rx$ to $-\rx$. Note that
$$
C x_i C^{-1} = x_i^\sigma.
$$
Hence if we set $R_y= k[\rx,y]_y$ we obtain an induced $R_y$ point that
induces a map
$$
\spec(k[x,y]_y)\rightarrow Z(\lie(\Sotn))/\!\!/\Sotn.
$$

\begin{proposition}
 \begin{enumerate}
  \item There is an induced morphism 
$$
\bar{\phi}_0:\spec(k(x,y))\rightarrow Z(\lie(\Sotn))^s/\PSotn.
$$
\item $\bar{\phi}_{0}^*\A = \Ba_{k(x,y)}\otimes M_n(k(x,y)).$
 \end{enumerate}
\end{proposition}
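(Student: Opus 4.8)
The plan is to mirror the proof of the corresponding proposition for $\Sptn$. For part (1), by Proposition~\ref{p:stableiso} it suffices to show that the $K\otimes\Lambda$-module $K^{2n}$ defined above, with $K=k(\rx,y)$, has no nonzero submodule totally isotropic for $B$. Granting this, the induced $K$-point $\phi_0$ of $Z(\lie(\Sotn))$ factors through $Z(\lie(\Sotn))^s$; and since $\phi_0$ is the restriction to the generic point $\spec K$ of the induced ($\Z/2\Z$-equivariant) $R_y$-point, composing it with the free quotient $Z^s\to Z^s/\PSotn$ yields a $\Z/2\Z$-invariant morphism $\spec K\to Z^s/\PSotn$. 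This descends to $\spec k(x,y)=\spec K^{\Z/2\Z}$, and via the open immersion $Z^s/\PSotn\hookrightarrow Z(\lie(\Sotn))/\!\!/\Sotn$ it is compatible with the map $\spec k[x,y]_y\to Z(\lie(\Sotn))/\!\!/\Sotn$ constructed above; this is the desired $\bar{\phi}_0$.

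The substance, and the step I expect to be the main obstacle, is the totally-isotropic verification. Here $x_1=\rx\,\diag(1,\dots,n,-1,\dots,-n)$ is semisimple with $2n$ distinct eigenvalues over $K$ --- this is where $\rx\ne 0$ and $\mathrm{char}\,k=0$ enter --- so any submodule $V$ is the span of a subset of the standard basis $e_1,\dots,e_{2n}$, and (since $B(e_i,e_{n+j})=\delta_{ij}$ and $B$ vanishes on each block) total isotropy means $V$ contains no \emph{dual pair} $e_j,e_{n+j}$. Computing $x_2e_k$ and $x_2e_{n+k}$ directly from the matrices $A$ and $Z$: for $k>1$ one finds $x_2e_k=\rx e_1+y^2e_{n+1}$ and $x_2e_{n+k}=ye_1-\rx e_{n+1}$, forcing $e_1,e_{n+1}\in V$; for $k=1$ the vectors $x_2e_1$ and $x_2e_{n+1}$ have standard-basis expansions involving both $e_2$ and $e_{n+2}$. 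In every case a nonzero submodule is forced to contain a dual pair, hence cannot be totally isotropic --- which is precisely the feature for which the apparently ad hoc choice of $A$, $Z$ and $D$ was arranged.

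Part (2) should then be formal once the bookkeeping is arranged. Identify $K^{2n}=K^2\otimes_K K^n$ by $e_i\leftrightarrow f_1\otimes g_i$ and $e_{n+i}\leftrightarrow f_2\otimes g_i$ for $1\le i\le n$; under this identification $M_{2n}(K)\cong M_2(K)\otimes_K M_n(K)$, and conjugation by $C$ becomes conjugation by the $2\times 2$ matrix of Proposition~\ref{p:quaternion}(1) tensored with $\mathrm{id}_{M_n(K)}$. From the way $\A=\A(\rho,U)$ descends along the free quotient $Z^s\to Z^s/\PSotn$ together with the $\Z/2\Z$-equivariance of $\phi_0$, the pullback $\bar{\phi}_0^*\A$ is the $k(x,y)$-subalgebra of $M_{2n}(K)$ fixed by the semilinear involution $M\mapsto CM^\sigma C^{-1}$; under the tensor decomposition this involution is the action of Proposition~\ref{p:quaternion}(2) on the $M_2(K)$ factor tensored with the entrywise Galois action on the $M_n(K)$ factor. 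Since the algebra of fixed points of a diagonal semilinear Galois action on a tensor product over $K$ is the tensor product over $k(x,y)$ of the two fixed algebras (Galois descent), and $M_n(K)^\sigma=M_n(k(x,y))$, Proposition~\ref{p:quaternion}(2) identifies $\bar{\phi}_0^*\A$ with $\Ba_{k(x,y)}\otimes_{k(x,y)}M_n(k(x,y))$; a degree count ($2n=2\cdot n$) is consistent with this.
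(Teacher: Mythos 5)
Your proposal is correct and takes essentially the same approach as the paper: reduce stability to the absence of totally isotropic submodules via Proposition~\ref{p:stableiso}, use the semisimplicity and distinct eigenvalues of $x_1$ to force any submodule to be coordinate, and then exploit $x_2$ to rule out total isotropy; for (2), use the $\Z/2\Z$-equivariance of $\phi$ through $C\in\PSotn$ and Galois descent of the trivial Azumaya algebra on $Z^s$, identifying the fixed subalgebra with $\Ba_{k(x,y)}\otimes M_n(k(x,y))$ via the decomposition $M_{2n}(K)\cong M_2(K)\otimes M_n(K)$. The only cosmetic difference is in (1): the paper disposes of isotropy in one stroke by computing $B(x_2(e_i),x_2(e_i))\neq 0$, whereas you locate an explicit dual pair $\{e_j,e_{n+j}\}$ inside the submodule; both are correct and amount to the same computation, and your treatment of (2), spelling out the tensor decomposition and Galois descent that the paper leaves implicit (citing its own symplectic case), is a faithful expansion rather than a different route.
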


\begin{proof}
 \noindent (1) We need to show that our map $\phi$ sends the generic point
to the stable locus. By \ref{p:stableiso} this amounts to showing that there are no totally isotropic submodule.
Suppose that we have an totally isotropic submodule $V\subseteq k(x,y)^{2n}$. As $x_1$ is semisimple with
distinct eigenvalues, $V$ must contain an $x_1$ eigenvector. We may as well assume that is $e_i\in k(x,y)^{2n}$ where $[e_i]_j=\delta_{ij}$.
But then $B(x_2(e_i),x_2(e_i))\ne 0$ so that the $K\otimes \Lambda$
module generated by $e_i$ is not totally isotropic. 

\noindent (2) This is similar to the analogous result for the
symplectic group. The algebra $\A$ is a quotient of the trivial 
Azumaya algebra of rank $4n^2$ over $Z$ where $\Sotn$ acts on this algebra
by conjugation. Pulling this action back via $\psi$ and $\phi$ one obtains the
action described in \ref{p:quaternion}.
\end{proof}

\begin{corollary}
 There exists a map
$$
\hat{\phi}:\spec(k((x,y)))\rightarrow \spec(\widehat{\struct{Z/\!\!/G,0}^s}).
$$
such that $\hat{\phi}^*\A = \Ba_{k(x,y)}\otimes M_n(k(x,y))$.
\end{corollary}

\begin{proof}
 As for the case of the symplectic group.
\end{proof}

\begin{corollary}
 There is no universal bundle on the generic point of $\moduli{\Sotn}^0$.
\end{corollary}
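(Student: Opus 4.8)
The plan is to run, essentially verbatim, the argument that settled the symplectic case, now fed by the two preceding results specialised to $G=\Sotn$. First I would take the morphism $\hat{\phi}:\spec(k((x,y)))\rightarrow \spec(\widehat{\struct{Z/\!\!/G,0}^s})$ produced by the previous corollary, which satisfies $\hat{\phi}^{*}\A = \Ba_{k(x,y)}\otimes M_n(k(x,y))$, base changed to $k((x,y))$. Since a Brauer class is unchanged by tensoring with a matrix algebra, $\hat{\phi}^{*}\A$ carries the class of the generic quaternion algebra over $k((x,y))$, which is nonzero by \cite[Corollary 4, page 82]{draxl:83}. As pullback of Brauer classes is a group homomorphism, the class of $\A$ in $\Br(\widehat{\struct{Z/\!\!/G,0}^s})$ is therefore itself nonzero.

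Next I would record that $\widehat{\struct{Z/\!\!/G,0}^s}$ is regular. This is because the earlier proposition presenting $Z^s\rightarrow Z^s/G^{ad}$ as a $G^{ad}$-principal bundle applies with $G=\Sotn$, so $Z^s/\PSotn$ is smooth, whence the localisation and completion of its structure sheaf entering the definition of $\widehat{\struct{Z/\!\!/G,0}^s}$ are regular. Regularity lets me invoke \cite[IV Corollary 2.6]{milne:80}: the Brauer group of $\widehat{\struct{Z/\!\!/G,0}^s}$ injects into that of its fraction field $\hat{k}_{Z/\!\!/G,0}$. Hence the nonzero class of $\A$ stays nonzero over $\hat{k}_{Z/\!\!/G,0}$, i.e. Nori's obstruction does not vanish.

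To finish, I would apply Theorem \ref{t:obstruction} with $G=\Sotn$: it says that a universal bundle over some open subset of $\moduli{\Sotn}^0$ would force the class of $\A$ in $\Br(\hat{k}_{Z/\!\!/G,0})$ to vanish. A universal bundle over the generic point of the irreducible variety $\moduli{\Sotn}^0$ spreads out to one over a dense open subset, so the non-vanishing just established rules this out, which is exactly the claim.

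Finally, on where the work sits: it is entirely upstream of this corollary. The delicate points were that $\phi$ carries the generic point into the stable locus of $Z(\lie(\Sotn))$ — the reason for the seemingly arbitrary choice of the matrices $A$ and $Z$, and where \ref{p:stableiso} is used; that the $\Z/2\Z$-action generated by $C$ (pulled back via $\psi$ and $\phi$) matches the Galois action of \ref{p:quaternion}; and that the completed map descends from $k((\rx,y))$ to $k((x,y))$. All three are already packaged into the preceding proposition and corollary, so the present argument is purely formal. The only $\Sotn$-specific bookkeeping is that the subgroup $H<\PSotn$ is defined over $k(y)$ rather than $k$, and that $\Sotn$ is not simply connected, so $\moduli{\Sotn}$ is disconnected and one must restrict to the identity component $\moduli{\Sotn}^0$; the latter is harmless since Theorem \ref{t:obstruction} is phrased for $\moduli{G}^0$ from the start.
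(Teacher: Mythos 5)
Your proof is correct and takes essentially the same route as the paper, which simply says ``this follows from the above discussion and Theorem \ref{t:obstruction}''; you have spelled out the details by transposing the argument given verbatim for $\Sptn$ (non-splitting of the quaternion algebra by Draxl, regularity of $\widehat{\struct{Z/\!\!/G,0}^s}$ via the principal-bundle structure, injectivity of the Brauer group into that of the fraction field, then Theorem \ref{t:obstruction}), which is exactly what the paper intends the reader to do.
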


\begin{proof}
 This follows from the above discussion and \ref{t:obstruction}.
\end{proof}

\bibliographystyle{alpha}
\bibliography{./../bibliography/alggrp.bib,./../bibliography/mybib.bib,./../bibliography/sga.bib,./../bibliography/mybib2.bib}
\end{document}